\theoremstyle{plain}% 
\newtheorem{theorem}{Theorem}
\newtheorem{example}[theorem]{Example}
\newtheorem{lemma}[theorem]{Lemma}
\newtheorem{corollary}[theorem]{Corollary}
\newtheorem{definition}[theorem]{Definition}
\newtheorem{remark}[theorem]{Remark}
\newtheorem{conjecture}[theorem]{Conjecture}
\date{}
\begin{document}

\title{Monomials, Binomials and Riemann-Roch}
\author{Madhusudan Manjunath and Bernd Sturmfels}

\maketitle

\begin{abstract}
\noindent
The Riemann-Roch theorem on a graph $G$ is  related to Alexander duality in combinatorial commutive algebra.
We study the lattice ideal given by chip firing on $G$ and the initial ideal whose standard monomials are the
$G$-parking functions. When $G$ is a saturated graph,  these ideals are generic and the Scarf complex is a minimal free resolution.
Otherwise, syzygies are obtained by degeneration. We also develop a self-contained Riemann-Roch theory for 
artinian monomial ideals.
\end{abstract}

\section{Introduction}

We examine the Riemann-Roch theorem on a finite graph $G$, due to Baker and Norine \cite{BakNor07},
through the lens of combinatorial commutative algebra. 
Throughout this paper, $G$ is undirected and connected, has $n$ nodes, and multiple edges are allowed, but we do not allow loops.
Its Laplacian is a symmetric $n \times n$-matrix $\Lambda_G$ with
non-positive integer entries off the diagonal and kernel
spanned by ${\bf e} = (1,1,\ldots,1)$. Divisors on $G$ are identified with Laurent monomials ${\bf x^u} = x_1^{u_1} x_2^{u_2} \cdots x_n^{u_n}$.
The {\em chip firing} moves are binomials ${\bf x^u - x^v}$ where 
${\bf u},{\bf v} \geq 0$ and ${\bf u-v}$ is in the lattice spanned by the columns of $\Lambda_G$.
The lattice ideal $I_G$ spanned by such binomials
is here called the {\em toppling ideal} of the graph $G$.
It was  introduced by Perkinson, Perlman and Wilmes \cite{Perkinson, Wil10},
following an earlier study of the inhomogeneous version of $I_G$ by 
Cori, Rossin and Salvy \cite{CorRosSal02}.

For any fixed node, the toppling ideal $I_G$ has a 
distinguished initial monomial ideal $M_{G}$. This
monomial ideal was studied by Postnikov and Shapiro \cite{PosSha}, and it is
characterized by the property that the standard monomials of $M_G$
are the {\em $G$-parking functions}. We construct 
free resolutions for both $I_G$ and $M_{G}$, 
and we study their role for Riemann-Roch theory on~$G$.
For an illustration, consider
 the complete graph on four nodes, $G = K_4$.
The chip firing moves  on $K_4$ are the integer
linear combinations of the columns~of
\begin{equation}
\label{eq:kayfour1} \Lambda_{G} \,\, = \,\, \begin{bmatrix}
\phantom{-} 3 & - 1 & - 1 & - 1\, \,\\
- 1 & \phantom{-} 3  & - 1 & - 1 \,\\
- 1 & - 1 & \phantom{-} 3 & -1 \,\\
- 1 & - 1 & - 1 & \phantom{-} 3 \,\end{bmatrix} .
\end{equation}
The toppling ideal is the lattice ideal in $\mathbb{K}[{\bf x}] = \mathbb{K}[x_1,x_2,x_3,x_4]$ that represents
$\,{\rm image}_\mathbb{Z}(\Lambda_G)$:
\begin{equation}
\label{eq:kayfour2}
 \begin{matrix} I_G \,\,= &
\bigl\langle \,
\underline{x_1^3} - x_2 x_3 x_4,\,
\underline{x_2^3} - x_1 x_3 x_4,\,
\underline{x_3^3} - x_1 x_2 x_4,\,
\underline{x_1 x_2 x_3} - x_4^3 , \\ & 
\underline{x_1^2 x_2^2} - x_3^2 x_4^2 \,,\,\,
\underline{x_1^2 x_3^2} - x_2^2 x_4^2 \,,\,\,
\underline{x_2^2 x_3^2} - x_1^2 x_4^2\,
\bigr\rangle .
\end{matrix}
\end{equation}
This ideal is {\em generic} in the sense of
Peeva and Sturmfels \cite{PeevaStr98}, as
each of the seven binomials contains all four variables.
The minimal free resolution is given by the {\em Scarf~complex}
\begin{equation}
\label{eq:kayfour3}
 0 \,\longleftarrow\, \mathbb{K}[{\bf x}]
 \,\longleftarrow \, \mathbb{K}[{\bf x}]^7  \,\longleftarrow \, 
 \mathbb{K}[{\bf x}]^{12}  \,\longleftarrow \, 
 \mathbb{K}[{\bf x}]^6
  \,\longleftarrow \, 0.
\end{equation}
The seven binomials in (\ref{eq:kayfour2})
form a Gr\"obner basis of $I_G$,
with the underlined monomials generating
the initial ideal $M_G$. That monomial ideal
 has the irreducible decomposition
$$ M_G \,\,\, = \,\,\,
\langle x_1, x_2^2, x_3^3 \rangle \,\cap \,
\langle x_1, x_2^3, x_3^2 \rangle \,\cap \,
\langle x_1^2, x_2, x_3^3 \rangle \,\cap \,
\langle x_1^2, x_2^3, x_3 \rangle \,\cap \,
\langle x_1^3, x_2, x_3^2 \rangle \,\cap \,
\langle x_1^3, x_2^2, x_3 \rangle .
$$
The ideal $M_G$ is the {\em tree ideal} of
\cite[\S 4.3.4]{MilStr05}. Its standard monomials
are in bijection with the $16$ spanning trees.
Its Alexander dual is generated by the six socle elements 
\begin{equation}
\label{eq:kayfour4}
x_2 x_3^2,\,  x_2^2 x_3, \,x_1 x_3^2,\, x_1 x_2^2,\, x_1^2 x_3, \,x_1^2 x_2 .
\end{equation}
These correspond to the {\em maximal
parking functions} studied in combinatorics; see~\cite{BCT, PosSha}.
We claim that the duality seen in Figures 4.2 and 4.3 of~\cite{MilStr05} 
is the same as that expressed in the Riemann-Roch Theorem for $G$.
This  will be made precise in Sections 3 and 4.

The present article is organized as follows:
Section 2 is concerned with the case when $G$ is
a {\em saturated graph}, meaning that any two
nodes $i$ and $j$ are connected by at least one edge.
We show that here $I_G$ is a generic lattice ideal, and
 we determine its minimal free resolution and its
Hilbert series in the finest grading.
The Scarf complex of the initial monomial ideal $M_G$ is supported on the
barycentric subdivision of the $(n-2)$-simplex \cite[\S 6]{PosSha}, and this lifts to the 
Scarf complex of the lattice ideal $I_G$ by \cite[Corollary 5.5]{PeevaStr98}.

In Section 3 we revisit the Riemann-Roch formula
\begin{equation}
\label{eq:RR}
 {\rm rank}(D) \,-\, {\rm rank}(K {-} D) \,\, =\,\, {\rm degree}(D) - {\rm genus} + 1.
 \end{equation}
 We prove this formula in an entirely new setting:
the role of the curve is played by a monomial ideal, and that of the divisors $D$ and $K$
is played by monomials ${\bf x^b}$ and ${\bf x^K}$.
 The identity (\ref{eq:RR}) is shown for  monomial ideals that are
artinian, level, and reflection-invariant. This includes the parking function ideals
 $M_G$ derived from saturated graphs~$G$.
 
 In Section 4 we extend our results to the case
 of graphs $G$ that are not saturated, and we rederive
Riemann-Roch for graphs  as a corollary.
   Here $M_G$ is still an initial ideal of $I_G$, but the choice of term order is more
 delicate \cite[\S 5]{Perkinson}. One choice is the cost function used
 by Baker and Shokrieh for the integer program in \cite[Theorem 4.1]{BakSho11}.
The Scarf complexes in Section 2 support
cellular free resolutions of $I_G$ and $M_G$, but these resolutions are
usually far from minimal. We conclude with
 several open questions.

This paper demonstrates how Riemann-Roch theory embeds into
combinatorial commutative algebra. Our main results are
Theorems \ref{satminres_theo}, \ref{RR_theo} and \ref{nonsatinit_theo}.
These build on earlier works, notably \cite{AmiMan10}
and \cite{PosSha}, but they go much further and are new in their current form.

When this collaboration started in the summer of 2011,
both authors  were unaware of the articles \cite{Perkinson, Wil10} written 
on similar topics by David Perkinson and his students at Reed College.
As our point of departure,
we chose to focus on chip firing in the most  classical case of undirected graphs, 
but with the tacit understanding that our ideals and modules generalize to directed graphs,
arithmetic graphs, simplicial complexes, matroids, abelian networks, or
any of the other extensions seen in the recent chip firing literature (cf.~\cite{AB}).

\section{Saturated graphs}\label{satgraph_sect}

In  this section, we assume that the graph $G$ has $u_{ij}$ edges between node $i$ and node $j$, where $u_{ij}$ 
is a positive integer, for $i \not= j$. However, we do not allow loops, so that $u_{11} = u_{22} = \cdots = u_{nn}   = 0$.
 Thus, in the language of \cite{PosSha},  $G$ is a {\em saturated graph}. We shall see that, under this hypothesis,  the lattice ideal $I_G$ is generic, and an explicit combinatorial description of its minimal free resolution can be given. Throughout this paper we work in the polynomial ring $\mathbb{K}[{\bf x}] = \mathbb{K}[x_1,\ldots,x_n]$ over an arbitrary field $\mathbb{K}$. 

We begin by explicitly showing the generators of the lattice ideal $I_G$ in the case $n=4$.

\begin{example}
\label{ex:fourex}
 \rm
 If $G$ is a saturated graph on $[4] = \{1,2,3,4\}$ then $I_G$ is generated by
$$ \underline{x_1^{u_{12}+u_{13}+u_{14}}} \,- \,x_2^{u_{12}} x_3^{u_{13}} x_4^{u_{14}}\,,\,\,
\underline{x_2^{u_{12}+u_{23}+u_{24}}} \,-\, x_1^{u_{12}} x_3^{u_{23}} x_4^{u_{24}}\,,\,\,
\underline{x_3^{u_{13}+u_{23}+u_{34}}} \,-\, x_1^{u_{13}} x_2^{u_{23}} x_4^{u_{34}}, $$
$$ \underline{x_1^{u_{14}} x_2^{u_{24}} x_3^{u_{34}}} \,\,-\,\,  x_4^{u_{14}+u_{24}+u_{34}}  \,,\,\,
\underline{x_1^{u_{13}+u_{14}} x_2^{u_{23}+u_{24}}} \,\,- \,\, x_3^{u_{13}+u_{23}} x_4^{u_{14}+u_{24}}\,, $$
$$ \underline{x_1^{u_{12}+u_{14}} x_3^{u_{23}+u_{34}}} \,- x_2^{u_{12}+u_{23}} x_4^{u_{14}+u_{34}}\,,\,\,
\underline{x_2^{u_{12} +u_{24}} x_3^{u_{13}+u_{34}}} \,-\,
x_1^{u_{12}+u_{13}} x_4^{u_{24}+u_{34}}.
$$
Here the $u_{ij}$ are arbitrary positive integers. These binomials form a Gr\"obner basis.
The initial ideal $M_G$ is generated by the underlined monomials.
The minimal free resolution of $I_G$ has the form (\ref{eq:kayfour3}).
The same holds for $M_G$, as was shown in \cite[Corollary 6.9]{PosSha}.
The minimal  resolution of $M_G$ is given by the Scarf complex, which is depicted
in Figure~\ref{fig:eins}.
 \qed
\end{example}

We now state our main result in this section.
For disjoint subsets $I$ and $J$ of $[n]$ we~set
$$ {\bf x}^{I \rightarrow J} \,\,\, := \,\,\,
 \prod_{i \in I} x_i^{\sum_{k \in J} u_{ik}}. $$
 A {\em split} of the set $[n] = \{1,2,\ldots,n\}$ is an 
unordered pair $(I,J)$ of non-empty disjoint subsets $I$ and $J$ 
whose union equals $[n]$. 
The number of splits equals $2^{n-1}-1$.
With each split $(I,J)$ we associate
the following binomial which is well-defined up to sign:
\begin{equation}
\label{eq:IJbinomial}
{\bf x}^{I \rightarrow J} \, -\, {\bf x}^{J \rightarrow I}.
     \end{equation}
These are precisely the seven binomials in 
Example \ref{ex:fourex}, one for each split $(I,J)$.

Let ${\rm Cyc}_{n,k}$ denote the set of
cyclically ordered partitions of the set $[n]$
into $k$ blocks. Each element of ${\rm Cyc}_{n,k}$
has the form $(I_1 , I_2 , \ldots ,I_k )$, where
$I_1 \cup I_2 \cup \cdots \cup I_k = [n]$ is a partition.
We regard the $(I_1,I_2,\ldots,I_k)$ as formal symbols,
subject to the identifications
$$ 
(I_1 , I_2 , \ldots , I_{k-1} ,I_k ) \,\, =\,\,
( I_2 , I_3 , \ldots ,I_k , I_1 ) \,\, = \,\,\, \cdots \,\,\, = \,\,
(I_k , I_1 , \ldots , I_{k-2} , I_{k-1} ).
$$
We write $\mathbb{K}[{\bf x}]^{{\rm Cyc}_{n,k}}$ for the free
$\mathbb{K}[{\bf x}]$-module generated by these symbols.
The rank of this free module equals
the number of cyclically ordered partitions, namely
\begin{equation}
\label{eq:cycnumber}
 |{\rm Cyc}_{n,k}| \,\,\,= \,\,\, (k-1)! \cdot S_{n,k} , 
 \end{equation}
where $S_{n,k}$ is the Stirling number of the second kind, i.e.,
the number of partitions of the set $[n]$ into $k$ blocks.
Let $\,\mathcal{CYC}_G\,$ denote the following  complex of free $\mathbb{K}[{\bf x}]$-modules:
\begin{equation}
\label{eq:cycres1}
 0 \,\longleftarrow \, \mathbb{K}[{\bf x}]^{{\rm Cyc}_{n,1}}
          \,\longleftarrow\,\mathbb{K}[{\bf x}]^{{\rm Cyc}_{n,2}}
                    \,\longleftarrow\,\mathbb{K}[{\bf x}]^{{\rm Cyc}_{n,3}} \,\longleftarrow \,\,\cdots \,
          \,\longleftarrow\,\mathbb{K}[{\bf x}]^{{\rm Cyc}_{n,n}}   \longleftarrow \, 0,          
 \end{equation}
          where the boundary map from $\,  \mathbb{K}[{\bf x}]^{{\rm Cyc}_{n,r}}\,$
          to   $\,  \mathbb{K}[{\bf x}]^{{\rm Cyc}_{n,r-1}}\,$ is given by the formula
\begin{equation}
\label{eq:cycres2}
 (I_1, I_2, I_3 , \ldots ,I_r) \,\,\,  \mapsto \quad
\begin{matrix} \sum_{s=1}^{r-1} (-1)^{s-1}
{\bf x}^{I_s \rightarrow I_{s+1}}  
(I_1, \ldots ,I_{s-1}, I_s \cup I_{s+1} ,I_{s+2}, \ldots , I_r)  \\
 - \,\,{\bf x}^{I_r \rightarrow I_1} \cdot (I_2,I_3, \ldots, I_{r-1}, I_1 \cup I_r).  \qquad \qquad \qquad \qquad 
 \end{matrix} .
\end{equation}
In this formula it is assumed that $n \in I_r$, so as to ensure that all signs are consistent.

\begin{figure}
 \includegraphics[width=15.8cm]{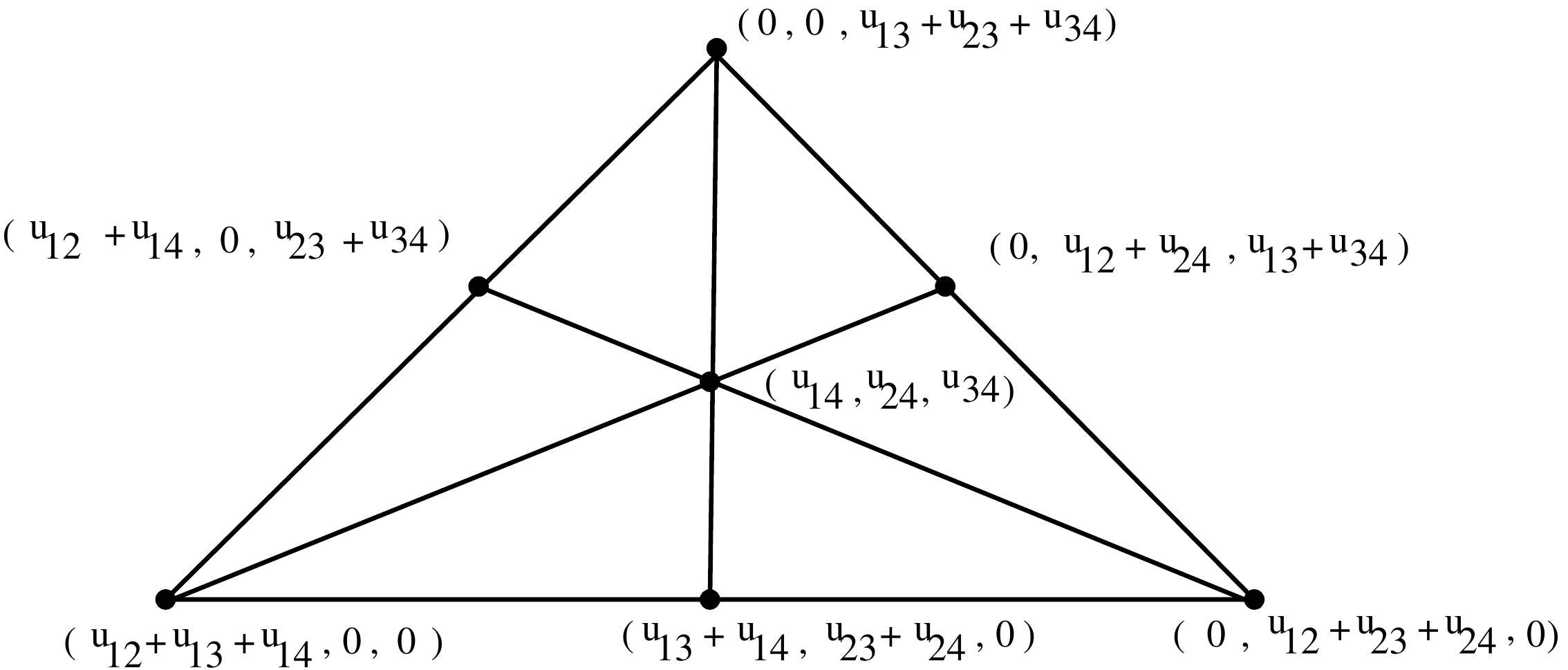} 
\caption{The barycentric subdivision of the $(n-2)$-simplex
supports minimal free resolutions for 
toppling ideals of saturated graphs with $n$ nodes and their initial ideals.
Shown here is the Scarf complex (\ref{eq:kayfour3}) for $n=4$,
with $7$ vertices, $12$ edges and $6$ triangles.
}
\label{fig:eins}
\end{figure}

\begin{theorem}\label{satminres_theo}
Let $G$ be a saturated graph.
The toppling ideal $I_G$ is a generic lattice ideal. 
It is minimally generated by the 
$2^{n-1}{-}1$ binomials  (\ref{eq:IJbinomial}), these
form a reverse lexicographic Gr\"obner basis, the 
complex $\,\mathcal{CYC}_G$ coincides with the Scarf complex,
and this complex minimally resolves~$ \mathbb{K}[{\bf x}]/I_G$.
\end{theorem}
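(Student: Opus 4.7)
The plan is to cascade through the assertions, using Postnikov-Shapiro's description of the Scarf complex of $M_G$ and the Peeva-Sturmfels lifting theorem to promote it to $I_G$.

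First, I would verify that each binomial ${\bf x}^{I \to J} - {\bf x}^{J \to I}$ lies in $I_G$ by checking that its exponent-vector difference equals $\Lambda_G \cdot {\bf 1}_I$: the $i$-th coordinate works out to $\sum_{k \in J} u_{ik}$ for $i \in I$ and $-\sum_{k \in I} u_{ik}$ for $i \in J$. Under the reverse lexicographic order with $x_n$ smallest, the leading term of the split-$(I,J)$ binomial, normalized so that $n \in J$, is ${\bf x}^{I \to J}$, giving $2^{n-1}-1$ underlined monomials indexed by non-empty proper subsets $I \subsetneq [n]$ with $n \notin I$. These are precisely the minimal generators of the tree ideal $M_G$ by Postnikov-Shapiro \cite[\S 6]{PosSha}. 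To upgrade the containment to a Gr\"obner basis equality I compare multiplicities: the standard monomials of $M_G$ are the $G$-parking functions and count the spanning trees of $G$ by the matrix-tree theorem, which equals the multiplicity of the lattice ideal $I_G$. Hence $\mathrm{in}_\prec(I_G) = M_G$, the binomials form a Gr\"obner basis, and they minimally generate $I_G$ since their leading terms are the minimal generators of $M_G$. Genericity of $I_G$ is immediate from saturation: every $u_{ij}>0$, so each binomial (\ref{eq:IJbinomial}) contains all $n$ variables with positive exponents.

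Postnikov and Shapiro \cite[Cor.~6.9]{PosSha} identified the Scarf complex of $M_G$ with the barycentric subdivision of the $(n-2)$-simplex, whose $(r{-}2)$-faces are chains $J_1 \subsetneq \cdots \subsetneq J_{r-1}$ of non-empty subsets of $[n] \setminus \{n\}$, and showed that this complex minimally resolves $M_G$. I would then set up a bijection with ${\rm Cyc}_{n,r}$ by sending such a chain to the ordered partition $(I_1, \ldots, I_r)$ with $I_1 = J_1$, $I_s = J_s \setminus J_{s-1}$ for $1 < s \leq r-1$, and $I_r = [n] \setminus J_{r-1}$, whose cyclic class lies in ${\rm Cyc}_{n,r}$ (the anchor $n \in I_r$ selects the canonical linear representative). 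A direct lcm computation on chains verifies that the coefficients ${\bf x}^{I_s \to I_{s+1}}$ in (\ref{eq:cycres2}) match the monomial ratios between consecutive faces appearing in the Scarf differential. Finally, by \cite[Corollary 5.5]{PeevaStr98} the Scarf resolution of the generic lattice ideal $I_G$ lifts that of $M_G$ on the same cell complex with binomial differentials, so $\mathcal{CYC}_G$ coincides with the Scarf complex of $I_G$ and minimally resolves $\mathbb{K}[{\bf x}]/I_G$.

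The main obstacle is the explicit matching of signs and monomial coefficients between the Scarf differential and (\ref{eq:cycres2}) — in particular the handling of the cyclic last term $-{\bf x}^{I_r \to I_1}(I_2,I_3, \ldots, I_{r-1}, I_1 \cup I_r)$, which reflects how the cyclic symmetry of the ordered-partition indexing interacts with the lift from monomial to binomial differentials under \cite[Corollary 5.5]{PeevaStr98}. Once this sign bookkeeping is settled, every other assertion of the theorem follows from the cited results of Postnikov-Shapiro and Peeva-Sturmfels.
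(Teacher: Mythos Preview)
Your proposal is correct and follows essentially the same route as the paper's proof: verify membership via $\Lambda_G \cdot {\bf 1}_I$, use the spanning-tree colength comparison to get $\mathrm{in}_\prec(I_G)=M_G$ and hence the Gr\"obner basis, deduce genericity from saturation, invoke Postnikov--Shapiro \cite[Cor.~6.9]{PosSha} for the barycentric Scarf complex of $M_G$, and lift via Peeva--Sturmfels. The only cosmetic difference is that the paper cites \cite[Theorem~4.2]{PeevaStr98} for minimality of the Scarf resolution in the generic case and \cite[Theorem~5.4]{PeevaStr98} for the explicit lifting rule that produces the extra cyclic summand $-{\bf x}^{I_r\to I_1}(I_2,\ldots,I_{r-1},I_1\cup I_r)$, whereas you cite Corollary~5.5; your identification of this cyclic term as the crux of the bookkeeping is exactly where the paper's proof concentrates as well.
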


\begin{proof}
We begin by noting that 
$\,{\bf x}^{I \rightarrow J}  - {\bf x}^{J \rightarrow I} \,$
actually lies in the ideal $I_G$. To see this, let
$e_I$ denote the incidence vector in $\{0,1\}^n$ that
represents the subset $I$ of $[n]$.
The $i$-th coordinate of the vector $\Lambda_G \cdot e_I$ is equal to
$\, \sum_{k \in J} u_{ik} $ if $i \in I$, and it is
$\,- \sum_{k \in I} u_{ik}$ if $i \in J$. Hence
$\Lambda_G \cdot e_I$ is represented algebraically by 
$\,{\bf x}^{I \rightarrow J}  - {\bf x}^{J \rightarrow I} $, which is hence in $I_G$.

Fix any reverse lexicographic term order on $\mathbb{K}[{\bf x}]$ 
that has $x_n$ as the smallest variable,
and let ${\rm in}(I_G)$ denote
the initial monomial ideal of $I_G$. Since $I_G$ is a lattice ideal,
 $x_n$ is a non-zerodivisor and it does not divide any of the
 generators of  ${\rm in}(I_G)$. We may thus regard
 ${\rm in}(I_G)$ as an artinian ideal in
  $\mathbb{K}[{\bf x}_{\backslash n}] = \mathbb{K}[x_1,\ldots,x_{n-1}]$.
The index of the Laplacian lattice ${\rm image}_{\mathbb{Z}}(\Lambda_G) $ in
its saturation $\{u \in \mathbb{Z}^n: u_1 + \cdots + u_n = 0\}$ equals the number $T_G$
of spanning trees of $G$. Hence ${\rm in}(I_G)$ has
$T_G$ standard monomials in $\mathbb{K}[{\bf x}_{\backslash n}] $.

Let $M_G$ denote the ideal generated by the  initial monomials of the binomials
in (\ref{eq:IJbinomial}):
\begin{equation}
\label{eq:MG}
 M_G \,\,\, = \,\,\, 
\bigl\langle \,{\bf x}^{I \rightarrow [n]\backslash I} \,:\,
\hbox{$I$ non-empty subset of $[n-1]$} \,\bigr\rangle. 
\end{equation}
By construction, the inclusion $M_G \subseteq {\rm in}(I_G)$ holds.
The monomial ideal $M_G$ was studied in \cite{PosSha}
and shown to have precisely $T_G$ standard monomials.
Indeed,  the standard monomials of $M_G$ are in bijection with the
{\em $n$-reduced divisors}. It is known in the chip firing literature
(cf.~\cite{BakNor07, BCT, CorRosSal02}) that their number equals the number $T_G$ of spanning trees.
Hence $M_G$ and ${\rm in}(I_G)$ are artinian of the same colength in 
$\mathbb{K}[{\bf x}_{\backslash n}] $, so they must be equal:
$$ M_G \,\,\, = \,\,\, {\rm in}(I_G). $$
Therefore the binomials (\ref{eq:IJbinomial})
form a Gr\"obner basis, and hence a generating set, of $I_G$.

The ideal $I_G$ is a generic lattice ideal, in the sense of 
\cite{PeevaStr98}, because all $n$ variables $x_1,\ldots,x_n$
occur in the binomial (\ref{eq:IJbinomial}). Here we are using
that $G$ is saturated. By \cite[Theorem 4.2]{PeevaStr98},
the Scarf complex is the (essentially unique) minimal free
resolution of~$I_G$.

It remains to be seen that the Scarf complex is equal to $\mathcal{CYC}_G$.
Postnikov and Shapiro \cite[Corollary 6.9]{PosSha} showed that
the Scarf complex of the initial ideal $M_G$ is supported on the 
barycentric subdivision of the $(n-2)$-simplex, as shown in Figure~\ref{fig:eins}. 
The Scarf resolution has the format
(\ref{eq:cycres1}), but with $\mathbb{K}[{\bf x}]$ replaced by $\mathbb{K}[{\bf x}_{\backslash n}]$.
Here,  we label the cells in that barycentric subdivision  with ordered partitions
$(I_1,I_2,\ldots,I_r)$ satisfying $n \in I_r$.
The boundary maps in the Scarf resolution are then given by
(\ref{eq:cycres2}), namely, by the sum ranging from $s=1$ to $s=r-1$,
but without the additional term
$\, - \,{\bf x}^{I_r \rightarrow I_1} \cdot (I_2,I_3, \ldots, I_{r-1}, I_1 \cup I_r)$.

We pass from the Scarf resolution of $M_G$ to that of
$I_G$ by the combinatorial rule in
\cite[Theorem 5.4]{PeevaStr98}. This adds precisely one
term to the boundary of each Scarf simplex of $M_G$.
In our case, that additional term is precisely the one above,
and we get (\ref{eq:cycres2}).
\end{proof}

\begin{example} \label{ex:ressi}
\rm
Returning to Example \ref{ex:fourex},
with the seven binomials in that order,
here are the matrices over $\mathbb{K}[x_1,x_2,x_3,x_4]$ that represent
 the first and second syzygies in $\mathcal{CYC}_4$:
$$ 
\bordermatrix{
  &  {}_{(1,2,34)} & {}_{(2,1,34)} & {}_{(1,3,24)} & {}_{(3,1,24)} & {}_{(2,3,14)} & 
  {}_{(3,2,14)} & {}_{(1,23,4)} & {}_{\cdots}    & {}_{(12,3,4)}
    \cr
{}_{(1,234)} & \! - {\bf x}^{2 \rightarrow 34} &\!\! - {\bf x}^{34 \rightarrow 2} & \! \! - {\bf x}^{3 \rightarrow 24} & 
\! - {\bf x}^{24 \rightarrow 3} & 0 & 0 & \!\! - {\bf x}^{23 \rightarrow 4} & \cdots & 0 
\cr 
{}_{(2,134)} & \! - {\bf x}^{34 \rightarrow 1} & \!\! -{\bf x}^{1 \rightarrow 34} & 0 & 0 & 
\!\! -{\bf x}^{3 \rightarrow 14} & \! \! - {\bf x}^{14 \rightarrow 3} & 0 & \cdots & 0 
\cr
{}_{(3,124)} &  0 & 0 & \! - {\bf x}^{24 \rightarrow 1} & \! \!- {\bf x}^{1 \rightarrow 24} & \! - {\bf x}^{14 \rightarrow 2} & 
\!\!  - {\bf x}^{2 \rightarrow 14} & 0 & \cdots & \! - {\bf x}^{4 \rightarrow 12}
\cr
{}_{(123,4)} & 0 & 0 &  0 & 0 & 0 & 0 & {\bf x}^{1 \rightarrow 23} & \cdots & {\bf x}^{12 \rightarrow 3}
\cr
{}_{(12,34)} & {\bf x}^{1 \rightarrow 2} & {\bf x}^{2 \rightarrow 1} & 0 & 0 & 0 & 0 & 0 & \cdots &
\! - {\bf x}^{3 \rightarrow 4}
\cr
{}_{(13,24)} & 0 & 0 & {\bf x}^{1 \rightarrow 3} & {\bf x}^{3 \rightarrow 1} & 0 & 0 & 0 & \cdots & 0 
\cr
{}_{(23,14)} & 0 & 0 & 0 & 0 & {\bf x}^{2 \rightarrow 3} & {\bf x}^{3 \rightarrow 2} & \!-{\bf x}^{4 \rightarrow 1} & 
\cdots & 0
\cr
}.
$$
$$
\bordermatrix{& 
 {}_{(1,2,3,4)} &
 {}_{(1,3,2,4)} &
 {}_{(2,1,3,4)} &
 {}_{(2,3,1,4)} &
 {}_{(3,1,2,4)} &
 {}_{(3,2,1,4)}  \cr
 {}_{(1,2,34)} &  {\bf x}^{3 \rightarrow 4} & 0 & 0 & 0 &\!- {\bf x}^{4 \rightarrow 3} & 0 \cr
 {}_{(2,1,34)} &   0 & 0 &  {\bf x}^{3 \rightarrow 4}  & 0 & 0 & \!-{\bf x}^{4 \rightarrow 3} \cr
 {}_{(1,3,24)} &   0 & {\bf x}^{2 \rightarrow 4} & \! -{\bf x}^{4 \rightarrow 2} &  0 & 0 & 0 \cr
 {}_{(3,1,24)} &   0 & 0 & 0 & \! - {\bf x}^{4 \rightarrow 2} & {\bf x}^{2 \rightarrow 4} & 0 \cr
 {}_{(2,3,14)} &  \!-{\bf x}^{4 \rightarrow 1} & 0 & 0 & {\bf x}^{1 \rightarrow 4} & 0 & 0 \cr
 {}_{(3,2,14)} &   0 & \! - {\bf x}^{4 \rightarrow 1} & 0 & 0 & 0 & {\bf x}^{1 \rightarrow 4} \cr
 {}_{(1,23,4)} &  \!-{\bf x}^{2 \rightarrow 3} & \! - {\bf x}^{3 \rightarrow 2} & 0 & 0 & 0 & 0 \cr
 {}_{(23,1,4)} &  0 & 0 & 0 & {\bf x}^{2 \rightarrow 3} & 0 & {\bf x}^{3 \rightarrow 2} \cr
 {}_{(2,13,4)} &  0 & 0 & \! - {\bf x}^{1 \rightarrow 3}  & \! - {\bf x}^{3 \rightarrow 1} & 0 & 0 \cr
 {}_{(13,2,4)} &   0 & {\bf x}^{1 \rightarrow 3} & 0 & 0 & {\bf x}^{3 \rightarrow 1} & 0\cr
{}_{(3,12,4)} &  0 & 0 & 0 & 0 & \! - {\bf x}^{1 \rightarrow 2} & \!-{\bf x}^{2 \rightarrow 1} \cr
 {}_{(12,3,4)} &  {\bf x}^{1 \rightarrow 2} & 0 & {\bf x}^{2 \rightarrow 1}&  0 & 0 & 0\cr
 }.
 $$
 Note that the seven binomial generators of $I_G$ appear as the $2 \times 2$-minors of the
 $3 \times 2$-matrices seen in the six pairs of columns within the
   $7 \times 12$-matrix of first syzygies.    The syzygies of the ideal $M_G$
   generated by the underlined monomials in Example \ref{ex:fourex}
      are found by replacing with $0$ all monomials  that have the symbol ``4'' to the left of the arrow.    \qed
 \end{example}

 One immediate application of our minimal free resolution is a formula 
 for the Hilbert series of the ring $\mathbb{K}[{\bf x}]/I_G$ in its natural
 grading by the  group ${\rm Div}(G) = \mathbb{Z}^n/{\rm image}_\mathbb{Z}(\Lambda_G)$.
As is customary in chip firing theory \cite{AB, BakNor07, BakSho11, Perkinson}, we consider the 
decomposition
$$ {\rm Div}(G) \,\, = \,\, \mathbb{Z} \,\oplus \,{\rm Div}_0(G), $$
where $\mathbb{Z}$ records the degree of a divisor on $G$, and
${\rm Div}_0(G)$ is the finite subgroup of divisors of degree $0$.
The order of ${\rm Div}_0(G)$  is the number of spanning trees of $G$.
Let $t$ and $q$ denote the generators of the group algebra
$\mathbb{Z}[{\rm Div}(G)]$ corresponding to this decomposition.
The Hilbert series of $\mathbb{K}[{\bf x}]/I_G$  equals $1/(1-t) $ times the Hilbert series of 
$\mathbb{K}[{\bf x}_{\backslash n}]/M_G$, where
$M_G = {\rm in}(I_G)$ is the initial ideal in (\ref{eq:MG}).
The latter series equals 
$$  \sum_{{\bf u}} t^{|{\bf u}|} q^{{\rm div}({\bf u})} .$$
This finite sum is over all elements ${\bf u} \in \mathbb{N}^{n-1}$
that represent parking functions on $G$ with respect to the
last node $n$, and ${\rm div}({\bf u})$ denotes the  class of the
reduced divisor of degree $0$ given by the vector $({\bf u},- \sum u_i)$.
See \cite[Theorem 6.14]{Perkinson} for a nice formula, due to Merino \cite{Merino},
which expresses this sum with $q=1$ in terms of
the Tutte polynomial of $G$.

We fix the natural epimorphism from
the semigroup algebra of $\mathbb{N}^{n-1}$ to that of ${\rm Div}(G)$:
$$ \psi:  \mathbb{Z}[{\bf x}_{\backslash n}] \rightarrow
\mathbb{Z}[{\rm Div}(G)] \,,\,\, \,  {\bf x}^{\bf u} \mapsto t^{|{\bf u}|} q^{{\rm div}({\bf u})} .$$
With this notation, our minimal free resolution
in Theorem \ref{satminres_theo} implies the following result:

\begin{corollary} \label{cor:HS}
The Hilbert series of $K[{\bf x}]/I_G$ in the grading by the group ${\rm Div}(G)$ equals
$$
  \frac{1}{1-t} \sum_{u} t^{|u|} q^{{\rm div}(u)}  \,\,\, = \,\,\,
   \frac{
1- \sum_{k=1}^n (-1)^k \sum_{(I_1,I_2,\ldots,I_k) \in {\rm Cyc}_{n,k}}
\psi({\bf x}^{I_1 \rightarrow I_2}{\bf x}^{I_2 \rightarrow I_3} \cdots{\bf x}^{I_{k-1} \rightarrow I_k})}{
(1-t)(1-\psi(x_1))(1-\psi(x_2))\, \cdots \,(1-\psi(x_{n-1}))} .$$
\end{corollary}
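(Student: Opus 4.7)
The plan is to read off the Hilbert series directly from the minimal free resolution $\mathcal{CYC}_G$ of Theorem~\ref{satminres_theo}, using the standard formula that any multigraded minimal free resolution
$$0\longleftarrow R/I\longleftarrow F_0\longleftarrow F_1\longleftarrow\cdots\longleftarrow F_p\longleftarrow 0$$
of a quotient of $R=\mathbb{K}[y_1,\ldots,y_m]$ yields
$$ H_{R/I}\cdot\prod_{j=1}^{m}\bigl(1-\psi(y_j)\bigr) \;=\; \sum_{i\geq 0}(-1)^i\sum_{F\in F_i}\psi({\bf y}^{\deg F}). $$

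First I would check that the left-hand side of the corollary is $H_{\mathbb{K}[{\bf x}]/I_G}$ in the ${\rm Div}(G)$-grading.  Since $I_G$ is a lattice ideal, the variable $x_n$ is a non-zero-divisor on $\mathbb{K}[{\bf x}]/I_G$, giving the multigraded isomorphism $\mathbb{K}[{\bf x}]/I_G \cong \mathbb{K}[x_n]\otimes_{\mathbb{K}}\mathbb{K}[{\bf x}_{\backslash n}]/M_G$.  Under the splitting ${\rm Div}(G)=\mathbb{Z}\oplus{\rm Div}_0(G)$ with basepoint $[n]$ one has $\psi(x_n)=t$; the $\mathbb{K}[x_n]$ factor contributes $\tfrac{1}{1-t}$ and the other tensorand contributes precisely the parking-function generating function $\sum_{\bf u}t^{|{\bf u}|}q^{{\rm div}({\bf u})}$ recalled immediately before the corollary.

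Next I would apply the displayed formula to $\mathcal{CYC}_G$: the $(k-1)$-st free module is based on ${\rm Cyc}_{n,k}$, so the numerator of $H_{\mathbb{K}[{\bf x}]/I_G}\cdot\prod_{i=1}^{n}(1-\psi(x_i))$ becomes $\sum_{k=1}^{n}(-1)^{k-1}\sum_{(I_1,\ldots,I_k)\in{\rm Cyc}_{n,k}}\psi({\bf x}^{\deg(I_1,\ldots,I_k)})$.  The crucial step is to identify the ${\rm Div}(G)$-multidegree $\deg(I_1,\ldots,I_k)$ with the class of the telescoping product $\prod_{s=1}^{k-1}{\bf x}^{I_s\rightarrow I_{s+1}}$; this is done by descending from $(I_1,\ldots,I_k)$ through the $s=1$ summand of the boundary map~(\ref{eq:cycres2}) down to the base generator $([n])$ in multidegree $0$, peeling off one monomial factor at each step and reducing modulo the lattice via the binomial relations from Theorem~\ref{satminres_theo}.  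The $k=1$ term then supplies the leading $1$ of the numerator, and $(1-\psi(x_n))=(1-t)$ splits off the denominator, producing the stated rational expression; the first equality of the corollary is a restatement of the preamble.

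The main obstacle is the multidegree identification.  A direct descent along the $s=1$ path accumulates the more complex product $\prod_{s=1}^{k-1}{\bf x}^{(I_1\cup\cdots\cup I_s)\rightarrow I_{s+1}}$; reducing this to the simpler $\prod_s{\bf x}^{I_s\rightarrow I_{s+1}}$ in ${\rm Div}(G)$ requires careful manipulation with the binomial identities ${\bf x}^{I\rightarrow[n]\setminus I}\equiv{\bf x}^{[n]\setminus I\rightarrow I}\pmod{I_G}$ established in the proof of Theorem~\ref{satminres_theo}.  One must also verify that the resulting ${\rm Div}(G)$-class is invariant under cyclic shift of the representative $(I_1,\ldots,I_k)$ and that the same class results if one descends via any other $s$-summand or via the extra term $-{\bf x}^{I_r\rightarrow I_1}\cdot(I_2,\ldots,I_{r-1},I_1\cup I_r)$ in~(\ref{eq:cycres2}); all of these consistency checks rest on the same family of binomial identities, which is precisely where the saturated-graph hypothesis enters.
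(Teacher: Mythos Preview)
Your approach is the paper's: read the Hilbert series off the resolution $\mathcal{CYC}_G$ from Theorem~\ref{satminres_theo}. The paper's own proof is a single sentence asserting that the $\mathbb{Z}^{n-1}$-degree of the basis element $(I_1,\ldots,I_k)$ in the Scarf resolution of $M_G$ equals the exponent of ${\bf x}^{I_1\rightarrow I_2}{\bf x}^{I_2\rightarrow I_3}\cdots{\bf x}^{I_{k-1}\rightarrow I_k}$.

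Your instinct that this multidegree identification is the delicate point is correct---and in fact both your proposal and the paper's proof share a gap precisely here. The $\mathbb{Z}^{n-1}$-degree of the Scarf face $(I_1,\ldots,I_k)$ is the least common multiple of its vertex labels ${\bf x}^{T_j\rightarrow\overline{T_j}}$ with $T_j=I_1\cup\cdots\cup I_j$, and one reads off directly from the boundary map~(\ref{eq:cycres2}) that this equals
\[
\prod_{s=1}^{k-1}{\bf x}^{(I_1\cup\cdots\cup I_s)\rightarrow I_{s+1}}
\;=\;
\prod_{s=1}^{k-1}{\bf x}^{I_s\rightarrow (I_{s+1}\cup\cdots\cup I_k)},
\]
not the telescoping product $\prod_{s}{\bf x}^{I_s\rightarrow I_{s+1}}$. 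For $K_4$ and the edge $(1,2,34)$, for instance, the actual degree is $x_1^3x_2^2$ while the telescoping product is only $x_1x_2^2$. The reduction you propose---passing from one to the other via the relations ${\bf x}^{I\rightarrow[n]\setminus I}\equiv{\bf x}^{[n]\setminus I\rightarrow I}\pmod{I_G}$---cannot succeed: those binomial identities equate monomials of the \emph{same} total degree, whereas the two products differ in total degree by $\sum_{t+2\le s\le k}\deg\bigl({\bf x}^{I_t\rightarrow I_s}\bigr)>0$, and $\psi$ records total degree in its $t$-exponent. Hence the two images in $\mathbb{Z}[{\rm Div}(G)]$ are genuinely distinct. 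The right-hand side of the corollary (and the paper's one-line proof) should use the LCM product displayed above in place of the telescoping product; with that correction your outline goes through, and the consistency checks you worry about---cyclic invariance and independence of the descent path---become automatic, since the LCM is intrinsic to the face.
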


\begin{proof}
It suffices to note that the $\mathbb{Z}^{n-1}$-degree of the
basis element $(I_1,I_2,\ldots,I_k)$ of the free
$\mathbb{K}[{\bf x}_{\backslash n}]$-module in the
$k$-th step of the resolution of $M_G$ is the
exponent vector of
${\bf x}^{I_1 \rightarrow I_2}{\bf x}^{I_2 \rightarrow I_3} \cdots{\bf x}^{I_{k-1} \rightarrow I_k}$.
This monomial does not contain $x_n$ because $n \in I_k$. 
\end{proof}

We close this section with a combinatorial recipe for the
socle monomials modulo $M_G$. These are
the monomials ${\bf x}^{\bf u}$ that are not in $M_G$ but
${\bf x}^{\bf u} x_i \in M_G$ for $i=1,\ldots,n-1$.
Each permutation of $[n-1]$ corresponds to a 
flag $\mathcal{T}$ of subsets
$\emptyset \subset T_1 \subset T_2 \subset T_3 \subset \dots \subset T_{n-1}$.
The flag is complete, meaning that each inclusion is strict
and each $T_i \backslash T_{i-1}$ is a singleton.
Let ${\overline T_i}$ denote the set complement of $T_i$ with respect to $[n]$.
For instance, ${\overline T_{n-1}} = \{n\}$.

\begin{corollary}\label{gensocle_theo} The socle monomials of $\mathbb{K}[{\bf x}_{\backslash n}]/M_G$ are precisely 
the $(n-1)!$ monomials
\begin{equation} \label{eq:MGsoc}
{\bf s}_{\mathcal{T}}\,\,= \,\,{\rm lcm}({\bf x}^{T_1 \rightarrow {\overline T_1}},{\bf x}^{T_2 \rightarrow {\overline T_2}},\dots,{\bf x}^{T_{n-1} \rightarrow {\overline T_{n-1}}})/(x_1 x_2 \cdots  x_{n-1}),
\end{equation}
where ${\mathcal{T}}$ runs over all complete flags of subsets  of $[n-1]$.
\end{corollary}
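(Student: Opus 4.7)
The plan is to read the socle of $\mathbb{K}[\mathbf{x}_{\backslash n}]/M_G$ directly off the facets of the Scarf complex provided by Theorem \ref{satminres_theo}. I will use the standard principle (see \cite[Chapter 5]{MilStr05} and \cite[\S 4]{PeevaStr98}): for an artinian monomial ideal $M \subset \mathbb{K}[y_1,\ldots,y_m]$ whose minimal free resolution is its Scarf complex, the socle of $\mathbb{K}[\mathbf{y}]/M$ has a $\mathbb{K}$-basis $\{\mathbf{y}^{\mathbf{a}_\sigma - \mathbf{1}}\}$ indexed by the facets $\sigma$, where $\mathbf{a}_\sigma$ is the exponent vector of the lcm of the vertex labels of $\sigma$.

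Applied to $M_G$ in the $n{-}1$ variables $x_1,\ldots,x_{n-1}$: by Theorem \ref{satminres_theo}, invoking Postnikov--Shapiro \cite[Corollary 6.9]{PosSha}, the Scarf complex of $M_G$ is the barycentric subdivision of the $(n{-}2)$-simplex, with vertices the non-empty subsets $T \subseteq [n-1]$ labelled by $\mathbf{x}^{T \rightarrow \overline{T}}$, and with facets the complete flags $\mathcal{T}: T_1 \subsetneq T_2 \subsetneq \cdots \subsetneq T_{n-1} = [n-1]$, of which there are $(n-1)!$. The multidegree of the facet $\sigma_\mathcal{T}$ is therefore $\mathrm{lcm}(\mathbf{x}^{T_1 \rightarrow \overline{T_1}},\ldots,\mathbf{x}^{T_{n-1} \rightarrow \overline{T_{n-1}}})$, and the socle formula returns exactly the monomial $\mathbf{s}_\mathcal{T}$ of (\ref{eq:MGsoc}). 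Distinctness of these $(n-1)!$ socle elements is automatic because, by definition of the Scarf complex, all facet multidegrees are distinct.

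The one real thing to check is that the division by $x_1 x_2 \cdots x_{n-1}$ actually yields a monomial, i.e., each $x_i$ with $i \in [n-1]$ divides the above lcm. This is where saturation enters cleanly: every $i \in [n-1]$ lies in $T_{n-1} = [n-1]$, so the vertex $\mathbf{x}^{[n-1] \rightarrow \{n\}}$ contributes the factor $x_i^{u_{in}}$, and $u_{in} \geq 1$ since $G$ is saturated. I expect the main obstacle, should one prefer a self-contained route avoiding the Scarf socle formula as a black box, to be the direct verification that (i) no generator $\mathbf{x}^{I \rightarrow \overline{I}}$ of $M_G$ divides $\mathbf{s}_\mathcal{T}$, and (ii) for each $i \in [n-1]$, some $\mathbf{x}^{T_{j} \rightarrow \overline{T_{j}}}$ divides $x_i \mathbf{s}_\mathcal{T}$ (the natural candidate is the $j$ with $i \in T_j \setminus T_{j-1}$). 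Exhaustion of the socle then follows from the numerical identity $(n-1)! = \beta_{n-1}(\mathbb{K}[\mathbf{x}_{\backslash n}]/M_G) = \dim_\mathbb{K} \mathrm{soc}(\mathbb{K}[\mathbf{x}_{\backslash n}]/M_G)$ furnished by Theorem \ref{satminres_theo}.
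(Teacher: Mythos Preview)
Your proof is correct and follows essentially the same route as the paper: both invoke Postnikov--Shapiro \cite[Corollary 6.9]{PosSha} (via Theorem~\ref{satminres_theo}) to identify the Scarf complex of $M_G$ with the barycentric subdivision of the $(n{-}2)$-simplex, and then apply the standard Scarf-socle correspondence (the paper cites \cite[Corollary 6.20]{MilStr05}) to read off the socle monomials as facet labels divided by $x_1\cdots x_{n-1}$. Your additional remarks on divisibility and distinctness are correct elaborations of details the paper leaves implicit.
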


\begin{proof}
The Scarf complex of $M_G$ is a minimal free resolution and it is
 supported on  the barycentric 
subdivision of the $(n-2)$-simplex,
by  \cite[Corollary 6.9]{PosSha} and our discussion above.
Each facet in that barycentric subdivision corresponds to a 
complete flag $\mathcal{T}$. 
The vertices of that facet are labeled by
${\bf x}^{T_1 \rightarrow {\overline T_1}},{\bf x}^{T_2 \rightarrow {\overline T_2}},
\ldots, {\bf x}^{T_{n-1} \rightarrow {\overline T_{n-1}}}$ in the Scarf complex,
and the monomial label of the facet is their least common multiple.
Facets of the Scarf complex are in  bijection with the
irreducible components of $M_G$ and also with the socle monomials modulo $M_G$.
By \cite[Corollary 6.20]{MilStr05}, each socle monomial is multiplied by the
product of all variables to give the monomial label of the corresponding facet.
\end{proof}

\begin{remark} \rm
Our results hold verbatim for all generic sublattices of
finite index in the root lattice $\,A_n = \{u \in \mathbb{Z}^n: \sum_{i=1}^n u_i = 0\}$,
so we recover the Voronoi theory of~\cite{AmiMan10, AB}.
We posit that our commutative algebra
derivation of their Voronoi theory
is a natural and useful one, and that it opens up
new and unexpected connections.
For instance, Gr\"obner bases of lattice ideals are fundamental for
integer programming \cite{Tho98}.
One original source for that application is Herbert Scarf's seminal work on
neighborhood systems in economics. A key example that motivated Scarf
was the {\em Leontief system} \cite[\S 2A]{Sca86}. It turns out
that the lattices representing Leontief systems
are precisely our generic lattices here.
The Gr\"obner basis property stated in Theorem \ref{satminres_theo}
is in fact equivalent to~\cite[Theorem 2.2]{Sca86}. \qed
\end{remark}

\section{A Riemann-Roch Theorem for Monomial Ideals}

In this section we fix an arbitrary artinian monomial ideal
$M$ in a polynomial ring $\mathbb{K}[\mathbf{x}] = \mathbb{K}[x_1,\ldots,x_m]$.
 We focus on Alexander duality \cite[\S 5]{MilStr05}, and we establish the Riemann-Roch 
formula (\ref{eq:RR}) in this new context.  Towards the end of this section, and in the next section, we will  
recover the Riemann-Roch formula for graphs from the Riemann-Roch formula for monomial ideals. To begin with, we need to gather the ingredients, that is, we need to 
redefine the notions of divisor, genus,  rank and degree. 

The role of {\em divisors} on the monomial ideal $M$ is played by Laurent monomials ${\bf x^b}$.

\begin{definition}{\rm ({\bf Rank of a monomial})} \rm \label{def:rank}
The {\em rank} of a monomial ${\bf x^b}$ 
with respect to $M$ is one less than the minimum degree of any monomial ${\bf x^a}$  that satisfies
$\,{\bf x^b} \in \langle {\bf x^a} \rangle \backslash {\bf x^a } M$.
\end{definition}

This definition is restricted to honest monomials ${\bf x^b}$, where ${\bf b} \geq 0$.
Just before the statement of Theorem \ref{RR_theo}, we shall extend the definition of rank to all Laurent monomials.

The rank measures how deeply a monomial ${\bf x^b}$ sits inside the ideal $M$.
We have ${\rm rank}({\bf x^b}) \geq 0$ if ${\bf x^b} \in M$ and
${\rm rank}({\bf x^{b}}) = -1$ otherwise. 
Rank zero monomials form the {\em border} of $M$.
Let $\,{\rm MonSoc}(M) \,= \, \bigl\{{\bf x^c} \not\in M \,|\,\,x_i {\bf x^c} \in M
\,\, \forall i  \,\bigr\}$ denote the set of {\em socle monomials} of $\mathbb{K}[{\bf x}]/M$. 
See Figure \ref{fig:zwei} for a picture of a monomial ideal.
The ideal generators are the large black circles, and monomials in $M$
are labeled by their rank. The socle elements are the black squares,
and  other standard monomials are white squares.

\begin{definition}{\rm ({\bf Reflection invariance})} \rm
A monomial ideal $M$ is {\em reflection-invariant} if there exists a
{\em canonical monomial} ${\bf x^{K}}$ such that the
 map $\,\phi \,: \, \bf x^{c} \mapsto {\bf x^{K}}/{\bf x^{c}}$ defines
 an involution of the set  $ {\rm MonSoc}(M)$.
 This requires that every socle monomial divides ${\bf x^K}$.
 \end{definition}
 
Using notation as in \cite[\S 5]{MilStr05}, we note that
our artinian monomial ideal $M$ is reflection-invariant
with canonical monomial ${\bf x^K}$ if and only if the following identity holds:
\begin{equation}
\label{eq:reflinv}
 M^{[{\bf K}+ {\bf e}]} \,\,\, = \,\,\, \bigl\langle\, {\rm MonSoc}(M) \,\bigr\rangle .
\end{equation}
Here ${\bf e} = (1,1,\ldots,1)$ and 
$M^{[{\bf K}+ {\bf e}]}$ is the {\em Alexander dual} of $M$ with respect to
${\bf K}+{\bf e}$.

\begin{definition}{\rm ({\bf Genus})} \rm
The ideal $M$ is {\em level}
if all socle monomials have the same degree.
If this holds then one plus that degree is called the {\em genus} of $M$, 
denoted $g = {\rm genus}(M)$.
\end{definition}

\begin{example} \rm
Let $M$ be the ideal generated by the seven underlined monomials in (\ref{eq:kayfour2}).
Then $M$ is level of genus $g = 4$, because all six socle monomials in (\ref{eq:kayfour4}) are cubics,
and $M$ is reflection-invariant. The canonical monomial ${\bf x^K} = x_1^2 x_2^2 x_3^2$ has
degree $ 2 g - 2 = 6$. 
But, the rank of ${\bf x^K}$ is equal to $g-2 = 2$, as can be seen from the following lemma. \qed
\end{example}

For ${\bf u} = (u_1,\ldots,u_m) \in {\bf Z}^m$ we abbreviate
  $\,{\rm degree}^{+}({\bf u})=\sum_{i:u_i > 0}u_i$.

\begin{lemma}\label{rankformart_theo}
Let $M$ be an artinian monomial ideal. Then every monomial ${\bf x^b}$ satisfies
\begin{equation} \label{rank_form}
                 {\rm rank}({\bf x^b}) \quad =\,\min_{{\bf x^c} \in {\rm MonSoc}(M)}{\rm degree}^{+}({\bf b-c})-1.
\end{equation}
\end{lemma}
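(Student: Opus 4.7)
The plan is to unpack Definition \ref{def:rank} directly as an integer-programming problem and then solve it by choosing the witness socle monomial optimally. By definition, $\mathrm{rank}(\mathbf{x^b}) + 1$ is the smallest value of $\deg(\mathbf{x^a})$ over all $\mathbf{a} \in \mathbb{N}^m$ with $\mathbf{a} \leq \mathbf{b}$ (so that $\mathbf{x^a}$ divides $\mathbf{x^b}$) and $\mathbf{x}^{\mathbf{b}-\mathbf{a}} \notin M$ (so that $\mathbf{x^b} \notin \mathbf{x^a} M$). Thus I am minimizing $|\mathbf{a}| = \sum_i a_i$ over nonnegative $\mathbf{a} \leq \mathbf{b}$ subject to $\mathbf{x}^{\mathbf{b}-\mathbf{a}}$ being a standard monomial of~$M$.

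Next I translate the standardness condition into one phrased in terms of $\mathrm{MonSoc}(M)$. Because $M$ is artinian, the poset of standard monomials of $M$ under divisibility is finite, and its maximal elements are exactly the socle monomials $\mathbf{x^c} \in \mathrm{MonSoc}(M)$. Consequently $\mathbf{x}^{\mathbf{b}-\mathbf{a}} \notin M$ if and only if there is some $\mathbf{x^c} \in \mathrm{MonSoc}(M)$ with $\mathbf{b}-\mathbf{a} \leq \mathbf{c}$, i.e., $a_i \geq b_i - c_i$ for every $i$. If needed, this maximality statement can be cited from \cite[\S 5]{MilStr05} or verified in one line by descending chain.

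Combining $a_i \geq 0$ and $a_i \geq b_i - c_i$, the minimization for a \emph{fixed} socle monomial $\mathbf{x^c}$ has optimal value
$$ \sum_{i=1}^m \max(0, b_i - c_i) \;=\; \mathrm{degree}^{+}(\mathbf{b}-\mathbf{c}), $$
attained by $a_i := \max(0, b_i - c_i)$; this choice automatically satisfies $a_i \leq b_i$ because $c_i \geq 0$, so the upper-bound constraint is not binding. Minimizing over $\mathbf{x^c} \in \mathrm{MonSoc}(M)$ then yields exactly the formula (\ref{rank_form}).

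The argument is essentially bookkeeping; the single nontrivial ingredient is the artinian-to-socle step used in the second paragraph, which ensures that the non-membership condition $\mathbf{x}^{\mathbf{b}-\mathbf{a}} \notin M$ can be certified by a socle monomial rather than by an arbitrary standard monomial. Once that is in hand, the minimum decouples coordinatewise, and the optimal $\mathbf{a}$ is read off at once.
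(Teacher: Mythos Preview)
Your proof is correct and follows essentially the same approach as the paper's: both rewrite Definition~\ref{def:rank} as the problem of finding a standard monomial ${\bf x}^{\mathbf{b}-\mathbf{a}}$ minimizing $|\mathbf{a}|$, then use the artinian hypothesis to reduce the search to standard monomials dividing some socle monomial. Your write-up is more explicit than the paper's (which omits the coordinatewise optimization that produces $\mathrm{degree}^+(\mathbf{b}-\mathbf{c})$), but the underlying argument is the same.
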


\begin{proof}
The condition $\,{\bf x^b} \in \langle {\bf x^a} \rangle \backslash {\bf x^a } M\,$
in Definition \ref{def:rank} is equivalent to
 $\, {\bf x}^{{\bf b}-{\bf a}} \in \mathbb{K}[{\bf x}]\backslash M$. Maximizing the degree
of ${\bf x^c} = {\bf x}^{{\bf b}-{\bf a}}$ subject to this condition is equivalent to
minimizing the degree of ${\bf x^a}$. But, since $M$ is
artinian, the maximal degree among its finitely many standard monomials
is attained by one of the socle monomials ${\bf x^c}\in {\rm MonSoc}(M)$. \end{proof}

\begin{remark} \rm
Formula (\ref{rank_form}) resembles the formula  in
 \cite[Lemma 2.2]{BakNor07}  for the rank of a divisor on a finite graph.
 We shall exploit this resemblance at the end of this section. \qed
\end{remark}

\begin{figure}
\qquad \qquad \qquad \includegraphics[width=11.0cm]{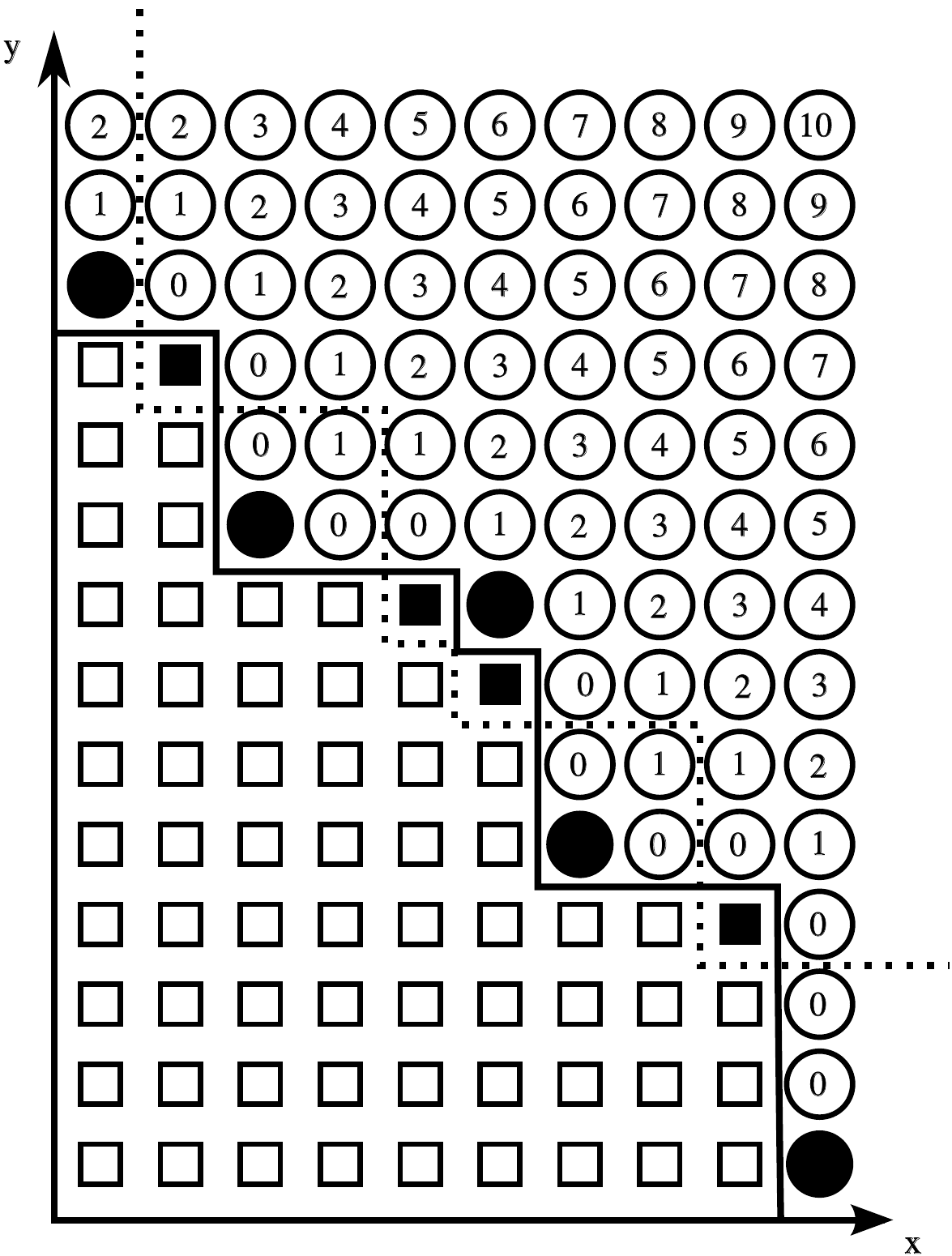} 

\medskip

\caption{The monomial ideal $M = \langle x^9 , x^6 y^4, x^5 y^7, x^2 y^8, y^{11} \rangle$
is Riemann-Roch of genus $12$, with canonical monomial
$x^9 y^{13}$ and ${\rm Soc}(M) = \{x^8 y^3, x^5 y^6, x^4 y^7, x y^{10} \} $.
Generators and socle monomials highlighted in dark.
Monomials in $M$ are labeled with their rank. 
The square boxes correspond to the standard monomials of $\mathbb{K}[{\bf x}]/M$. The dotted lines mark the boundary of the staircase region of $M^{[{\bf K}+ {\bf e}]}$.
Note that the identity (\ref{eq:reflinv}) holds.
}
\label{fig:zwei}
\end{figure}

The formula in (\ref{rank_form}) can be rewritten to be reminiscent of 
 S-pairs for Gr\"obner bases:
\begin{equation}
{\rm rank}({\bf x^b}) \quad =
\,\min_{{\bf x^c} \in \text{MonSoc}(M)}{\rm degree}\left(\frac{{\rm lcm}({\bf x^b},~{\bf x^c}\big)}{{\bf x^c}}\right)-1. 
\end{equation}

We now define the {\em rank} of an arbitrary Laurent monomial ${\bf x^b}$ by the
formula (\ref{rank_form}). This  is consistent with Definition  \ref{def:rank},
and it is the natural extension to monomials some of whose exponents are negative.
The following main result gave this section its title:

\begin{theorem}\label{RR_theo} 
Let $M$ be a monomial ideal that is artinian, level, and reflection-invariant.
Then $M$ satisfies the Riemann-Roch formula, i.e.,~every Laurent monomial ${\bf x^{b}}$ satisfies
\begin{equation} \label{eq:RRF}
{\rm rank}({\bf x^{b}})-{\rm rank}({\bf x^{K}}/{\bf x^{b}})\,\,=\,\,{\rm degree}({\bf x^{b}})- {\rm genus}(M) +1.
\end{equation}
\end{theorem}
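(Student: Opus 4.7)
The plan is to apply Lemma~\ref{rankformart_theo} to both ranks on the left of (\ref{eq:RRF}), then use the reflection involution to bring the two minima over a common index set, and finally exploit the level hypothesis together with an elementary identity for $\mathrm{degree}^+$ to match the two sides term by term.

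First I would write, directly from Lemma~\ref{rankformart_theo},
\[ \mathrm{rank}({\bf x^b}) + 1 \,=\, \min_{{\bf x^c}\in\mathrm{MonSoc}(M)} \mathrm{degree}^+({\bf b}-{\bf c}), \]
and the analogous expression with ${\bf b}$ replaced by ${\bf K}-{\bf b}$ for $\mathrm{rank}({\bf x^K}/{\bf x^b})+1$. The crucial move is to rewrite this second minimum using the involution $\phi\colon {\bf x^c}\mapsto {\bf x^K}/{\bf x^c}$ on $\mathrm{MonSoc}(M)$: substituting ${\bf c}' = {\bf K}-{\bf c}$, which is a bijection of $\mathrm{MonSoc}(M)$ with itself by reflection-invariance, converts $({\bf K}-{\bf b})-{\bf c}'$ into ${\bf c}-{\bf b}$, giving
\[ \mathrm{rank}({\bf x^K}/{\bf x^b}) + 1 \,=\, \min_{{\bf x^c}\in\mathrm{MonSoc}(M)} \mathrm{degree}^+({\bf c}-{\bf b}). \]

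Next I would invoke the elementary identity $\mathrm{degree}^+({\bf u}) - \mathrm{degree}^+(-{\bf u}) = \mathrm{degree}({\bf u})$ (valid for any ${\bf u}\in\mathbb{Z}^m$, since the two sides both equal $\sum_i u_i$) applied with ${\bf u} = {\bf b}-{\bf c}$, and combine it with the level hypothesis $\mathrm{degree}({\bf c}) = g-1$ for every socle monomial ${\bf x^c}$, to obtain the per-${\bf c}$ identity
\[ \mathrm{degree}^+({\bf b}-{\bf c}) \,=\, \mathrm{degree}^+({\bf c}-{\bf b}) + \mathrm{degree}({\bf b}) - g + 1. \]
The additive correction $\mathrm{degree}({\bf b}) - g + 1$ is independent of ${\bf c}$, so minimizing both sides over ${\bf c}\in\mathrm{MonSoc}(M)$ preserves the equality and yields
\[ \min_{\bf c}\mathrm{degree}^+({\bf b}-{\bf c}) \,-\, \min_{\bf c}\mathrm{degree}^+({\bf c}-{\bf b}) \,=\, \mathrm{degree}({\bf b}) - g + 1. \]
Substituting the two rank formulas displayed above then gives (\ref{eq:RRF}).

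I do not expect a real obstacle: the hypotheses of Theorem~\ref{RR_theo} have been tailored so that the argument is a short computation once Lemma~\ref{rankformart_theo} is in hand. The only point that deserves care is the reindexing step, where one must verify that $\{{\bf K}-{\bf c} : {\bf c}\in \mathrm{MonSoc}(M)\} \,=\, \mathrm{MonSoc}(M)$ as subsets of $\mathbb{Z}^m$; this is exactly the content of reflection-invariance via (\ref{eq:reflinv}).
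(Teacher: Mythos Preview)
Your proposal is correct and follows essentially the same route as the paper's proof: both apply Lemma~\ref{rankformart_theo} to the two ranks, use the elementary identity $\mathrm{degree}^+({\bf u})-\mathrm{degree}^+(-{\bf u})=\mathrm{degree}({\bf u})$ together with the level hypothesis $\mathrm{degree}({\bf c})=g-1$, and invoke reflection-invariance to reindex the second minimum over $\mathrm{MonSoc}(M)$. The only cosmetic difference is that you perform the reindexing ${\bf c}'={\bf K}-{\bf c}$ up front, whereas the paper writes the second minimum as $\min_{\bf c}\mathrm{degree}^+(\bar{\bf c}-{\bf b})$ and applies the involution at the end.
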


\begin{proof}
We denote ${\bf x^{K}}/{\bf x^{c}}$ by ${\bf x^{\bar{c}}}$.
Using the formula for rank shown in Lemma \ref{rankformart_theo}, the left hand side of (\ref{eq:RRF}) equals
\begin{equation}
\label{eq:dreiii}
\min_{{\bf x^c} \in \text{MonSoc}(M)}{\rm degree}^{+}({\bf b-c}) \quad -\, 
 \min_{{\bf x^c} \in \text{MonSoc}(M)}{\rm degree}^{+}({\bf \bar{c}-b}). 
\end{equation}
For any socle monomial ${\bf x^c}$ we have ${\rm degree}^{+}({\bf b-c})-{\rm degree}^{+}({\bf c-b})={\rm degree}({\bf x^b})-{\rm degree}({\bf x^c})={\rm degree}({\bf x^b})-({\rm genus}(M)-1)$,
and hence
\begin{equation}\label{dplus_eq}
{\rm degree}^{+}({\bf b-c})={\rm degree}^{+}({\bf c-b})+{\rm degree}({\bf x^b})-({\rm genus(M)}-1). 
\end{equation}
Taking the minimum of ${\rm degree}^{+}({\bf b-c})$ over ${\bf x^c} \in {\rm MonSoc}(M)$, 
equation (\ref{dplus_eq}) implies
\begin{gather}
\label{eq:vierrr}
\hspace{-7.0cm} \min_{{\bf x^c} \in \text{MonSoc}(M)}{\rm degree}^{+}({\bf b-c})  \\
\notag = \min_{{\bf x^c} \in \text{MonSoc}(M)} \bigl( {\rm degree}^{+}({\bf c-b})+{\rm degree}({\bf x^b})-({\rm genus}(M)-1) \bigr) \\
\hspace{0.5cm}\notag=   \left( \min_{{\bf x^c} \in \text{MonSoc}(M)} {\rm degree}^{+}({\bf c-b})\right) \,+\, {\rm degree}({\bf x^b})-({\rm genus}(M)-1).
\end{gather}
Since the map $\phi$ is an involution, we can replace $ {\bf \bar{c}}$ by ${\bf c}$ in
the second row of (\ref{eq:dreiii}). It then follows from (\ref{eq:vierrr}) that (\ref{eq:dreiii}) 
is equal to $\,{\rm degree}({\bf x^b})-{\rm genus}(M)+1$, as desired.
\end{proof}

%%\begin{equation}
%\label{eq:vierrr}
 %\min_{{\bf x^c} \in \text{MonSoc}(M)}{\rm degree}^{+}({\bf b-c})  
 %=  \min_{{\bf x^c} \in \text{MonSoc}(M)} \bigl( {\rm degree}^{+}({\bf c-b})+{\rm degree}({\bf x^b})-({\rm genus}(M)-1) \bigr) \\
%=  (\min_{{\bf x^c} \in \text{MonSoc}(M)} {\rm degree}^{+}({\bf c-b})) \,+\, {\rm degree}({\bf x^b})-({\rm genus}(M)-1).
%%\end{equation}

%\begin{matrix}
%&  & \min_{{\bf x^c} \in \text{MonSoc}(M)}{\rm degree}^{+}({\bf b-c})  \\
% & = & \min_{{\bf x^c} \in \text{MonSoc}(M)} \bigl( {\rm degree}^{+}({\bf c-b})+{\rm degree}({\bf x^b})-({\rm genus}(M)-1) \bigr) \\
%& = & (\min_{{\bf x^c} \in \text{MonSoc}(M)} {\rm degree}^{+}({\bf c-b})) \,+\, {\rm degree}({\bf x^b})-({\rm genus}(M)-1).
%\end{matrix}

\begin{remark} \rm Theorem \ref{RR_theo} can be extended
to monomial ideals $M$ that are artinian and reflection invariant 
but not necessarily level. Such $M$ arise as initial ideals
 from directed regular (indegree = outdegree) graphs.
Following \cite[\S 2.3]{AmiMan10}, we define ${\rm genus}_{\rm min}(M)$ 
as one minus the minimum degree of a socle monomial of $M$,
and ${\rm genus}_{\rm max}(M)$   as  one minus the maximum degree of 
a socle monomial of $M$. Using a technique 
similar to that in the proof of Theorem \ref{RR_theo}, we can derive the following Riemann-Roch inequalities:
\begin{equation}\label{RRE_inq}
{\rm genus}_{\rm min}(M)-1 
\,\leq\, {\rm degree}({\bf x^{b}})-{\rm rank}({\bf x^{b}})+{\rm rank}({\bf x^{K}}/{\bf x^{b}})
\,\leq \, {\rm genus}_{\rm max}(M)-1.
\end{equation}
Of course, the above inequality generalizes the Riemann-Roch formula (\ref{eq:RRF}):  if the ideal $M$ is also level then
 ${\rm genus}_{\rm  max}(M)={\rm genus}_{\rm min}(M)={\rm genus}(M)$, and
the Riemann-Roch formula (\ref{eq:RRF}) immediately 
follows from the inequalities (\ref{RRE_inq}). \qed
\end{remark}

We say that a monomial ideal $M$ is {\em Riemann-Roch}
if it is artinian, level, and reflection invariant. See Figure \ref{fig:zwei} for an example 
in two variables.
  In what follows we assume that $M$ is a
  Riemann-Roch monomial ideal. The next corollaries
are formal consequences of  the Riemann-Roch formula,
as is the case  for algebraic curves and graphs.

\begin{corollary}\label{cano_cor} If ${\bf x^b}$ is a multiple of  ${\bf x^K}$ then
       ${\rm rank}({\bf x^{b}})={\rm degree}({\bf x^{b}})-{\rm genus}(M)$. 
       \end{corollary}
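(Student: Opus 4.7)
The plan is to derive this corollary as an immediate consequence of the Riemann-Roch formula in Theorem \ref{RR_theo}, by showing that the ``dual'' rank term ${\rm rank}(\mathbf{x^K}/\mathbf{x^b})$ vanishes to $-1$ whenever $\mathbf{x^b}$ is a multiple of $\mathbf{x^K}$.

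First I would unpack what ``multiple of $\mathbf{x^K}$'' means: we can write $\mathbf{b} = \mathbf{K} + \mathbf{d}$ for some nonnegative integer vector $\mathbf{d}$, so that $\mathbf{x^K}/\mathbf{x^b} = \mathbf{x}^{-\mathbf{d}}$, i.e.\ the exponent vector $\mathbf{K} - \mathbf{b} = -\mathbf{d}$ is coordinatewise nonpositive. Next I would apply the extended definition of rank via the formula of Lemma \ref{rankformart_theo}:
\[
{\rm rank}(\mathbf{x^K}/\mathbf{x^b}) \,\,=\,\, \min_{\mathbf{x^c} \in {\rm MonSoc}(M)} {\rm degree}^{+}(\mathbf{K}-\mathbf{b}-\mathbf{c}) \,-\, 1.
\]
Since $\mathbf{K}-\mathbf{b} \leq 0$ and every socle exponent $\mathbf{c}$ is nonnegative, the vector $\mathbf{K}-\mathbf{b}-\mathbf{c}$ is coordinatewise nonpositive, so ${\rm degree}^{+}(\mathbf{K}-\mathbf{b}-\mathbf{c}) = 0$ for every socle monomial $\mathbf{x^c}$. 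Therefore ${\rm rank}(\mathbf{x^K}/\mathbf{x^b}) = -1$.

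Substituting this value into the Riemann-Roch identity (\ref{eq:RRF}) from Theorem \ref{RR_theo} gives
\[
{\rm rank}(\mathbf{x^b}) - (-1) \,\,=\,\, {\rm degree}(\mathbf{x^b}) - {\rm genus}(M) + 1,
\]
which rearranges to ${\rm rank}(\mathbf{x^b}) = {\rm degree}(\mathbf{x^b}) - {\rm genus}(M)$, as claimed. There is no real obstacle here; the only thing to be careful about is ensuring that the ${\rm degree}^{+}$ formula from Lemma \ref{rankformart_theo} is being legitimately applied to a Laurent monomial with possibly negative exponents, but this is precisely the extension of the rank definition stipulated in the paragraph immediately preceding Theorem \ref{RR_theo}.
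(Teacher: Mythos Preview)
Your proof is correct and follows exactly the paper's approach: show that ${\rm rank}(\mathbf{x^K}/\mathbf{x^b}) = -1$ and substitute into the Riemann-Roch formula. You simply supply more detail than the paper does, explicitly verifying via the ${\rm degree}^+$ formula that the dual rank equals $-1$, whereas the paper asserts this in one line.
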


\begin{proof} If ${\bf x^K}$ divides ${\bf x^{b}}$, then ${\rm rank}({\bf x^K/x^{b}})=-1$.
 Plugging this equation into the Riemann-Roch formula gives the assertion.
\end{proof}

Note that, by definition,
the degree of the canonical monomial ${\bf x^K}$ equals twice the socle degree. 
We record the following general facts about the canonical monomial ${\bf x^K}$.

\begin{corollary} \label{cor:xK}
The canonical monomial of a Riemann-Roch monomial ideal $M$ satisfies
$$ {\rm degree}({\bf x^K}) \,=\, 2 \cdot {\rm genus}(M) - 2 \quad
\hbox{and} \quad {\rm rank}({\bf x^K}) \,= \,{\rm genus}(M)- 2. $$
\end{corollary}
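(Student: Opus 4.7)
The plan is to extract both statements almost immediately from the definitions plus the two earlier facts already in place: reflection-invariance gives an involution on the socle, levelness fixes a common socle degree, and Corollary~\ref{cano_cor} converts a degree statement about $\mathbf{x^K}$ into a rank statement.

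First I would handle the degree formula. Because $M$ is reflection-invariant, the map $\phi : \mathbf{x^c} \mapsto \mathbf{x^K}/\mathbf{x^c}$ is an involution on $\mathrm{MonSoc}(M)$; in particular, for any socle monomial $\mathbf{x^c}$, the quotient $\mathbf{x^K}/\mathbf{x^c}$ is again a socle monomial. Because $M$ is level, both $\mathbf{x^c}$ and $\mathbf{x^K}/\mathbf{x^c}$ have degree exactly $\mathrm{genus}(M)-1$. Adding these degrees gives
\[
\mathrm{degree}(\mathbf{x^K}) \,=\, \mathrm{degree}(\mathbf{x^c}) + \mathrm{degree}(\mathbf{x^K}/\mathbf{x^c}) \,=\, 2\bigl(\mathrm{genus}(M)-1\bigr),
\]
which is the first claim.

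Next, for the rank formula, I would apply Corollary~\ref{cano_cor} directly to $\mathbf{x^b} = \mathbf{x^K}$, which is certainly a multiple of $\mathbf{x^K}$. That corollary yields
\[
\mathrm{rank}(\mathbf{x^K}) \,=\, \mathrm{degree}(\mathbf{x^K}) - \mathrm{genus}(M) \,=\, (2\,\mathrm{genus}(M)-2) - \mathrm{genus}(M) \,=\, \mathrm{genus}(M)-2,
\]
completing the proof. There is no real obstacle here; the only subtle point is to invoke reflection-invariance to see that $\mathbf{x^K}/\mathbf{x^c}$ lies in $\mathrm{MonSoc}(M)$ and hence has the socle degree $\mathrm{genus}(M)-1$, so that we are entitled to split the degree of $\mathbf{x^K}$ symmetrically between $\mathbf{x^c}$ and $\mathbf{x^K}/\mathbf{x^c}$.
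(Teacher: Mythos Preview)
Your proposal is correct and matches the paper's approach: the paper notes just before the corollary that ``by definition, the degree of the canonical monomial $\mathbf{x^K}$ equals twice the socle degree,'' which is exactly your reflection-invariance plus levelness argument, and the rank statement then follows from Corollary~\ref{cano_cor} applied to $\mathbf{x^b}=\mathbf{x^K}$, as you do. The paper gives no separate proof environment for this corollary, so you have simply made explicit what the paper leaves as an immediate consequence.
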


Experts will note that the rank is off by one when compared
to the canonical divisor of an algebraic curve or metric graph.
This discrepancy will be addressed in Remark \ref{rmk:recon} below.
We now prepare for an analogue of Clifford's theorem on special divisors.

\begin{lemma}\label{revtri_lem}
The rank is superadditive for monomials  ${\bf x^{a}}$ and ${\bf x^{b}}$ of non-negative rank:
\begin{equation}
\label{eq:superadd}
{\rm rank}({\bf x^a} \cdot {\bf x^b}) \,\, \geq \,\, {\rm rank}({\bf x^a})+{\rm rank}({\bf x^b}). 
\end{equation}
\end{lemma}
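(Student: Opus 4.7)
The plan is to apply the rank formula from Lemma~\ref{rankformart_theo} and reduce the claim to a coordinate-wise inequality on truncated degrees. Set $r_1 := {\rm rank}({\bf x^a})$ and $r_2 := {\rm rank}({\bf x^b})$, both non-negative by hypothesis. By Lemma~\ref{rankformart_theo}, every socle monomial ${\bf x^c} \in {\rm MonSoc}(M)$ satisfies ${\rm degree}^+({\bf a-c}) \geq r_1+1$ and ${\rm degree}^+({\bf b-c}) \geq r_2+1$.

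The technical heart of the proof, and the step where I expect the main subtlety to lie, is the pointwise inequality
\[
(a_i + b_i - c_i)^+ \;\geq\; (a_i - c_i)^+ + (b_i - c_i)^+ \qquad \text{for } a_i, b_i, c_i \geq 0,
\]
which I would verify by a short four-way case split on whether $a_i \geq c_i$ and whether $b_i \geq c_i$ (in each case the difference of the two sides comes out to be either $c_i$, $a_i$, $b_i$, or $(a_i+b_i-c_i)^+$, all non-negative). The hypothesis $a_i, b_i \geq 0$ is essential here: it encodes the tacit assumption that ${\bf x^a}$ and ${\bf x^b}$ are honest monomials, not arbitrary Laurent monomials of non-negative rank. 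Without it the pointwise inequality and in fact superadditivity itself can fail, e.g.\ for $M = \langle x^2, y^2 \rangle$ with ${\bf a}=(3,-1)$ and ${\bf b}=(-1,3)$, where both sides have rank $1$ but ${\bf x^{a+b}}$ has rank $1$ as well.

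Summing the pointwise inequality over $i$ yields, for every socle monomial ${\bf x^c}$,
\[
{\rm degree}^+({\bf a+b-c}) \;\geq\; {\rm degree}^+({\bf a-c}) + {\rm degree}^+({\bf b-c}) \;\geq\; (r_1+1)+(r_2+1).
\]
Taking the minimum over ${\bf x^c} \in {\rm MonSoc}(M)$ and applying Lemma~\ref{rankformart_theo} one last time gives ${\rm rank}({\bf x^{a+b}}) + 1 \geq r_1+r_2+2$, whence ${\rm rank}({\bf x^a \cdot x^b}) \geq r_1+r_2$, as desired (in fact with one to spare).
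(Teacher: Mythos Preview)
Your proof is correct, but it takes a genuinely different route from the paper's. The paper argues directly from Definition~\ref{def:rank}: given any divisor ${\bf x^c}$ of ${\bf x^{a+b}}$ with ${\rm degree}({\bf x^c})\le r_1+r_2$, it splits ${\bf c}={\bf c'}+{\bf c''}$ with ${\bf x^{c'}}\mid{\bf x^a}$ and ${\bf x^{c''}}\mid{\bf x^b}$, observes that one of the two parts must have small degree, and uses that $M$ is an ideal to conclude ${\bf x^{a+b-c}}\in M$. You instead go through the socle formula of Lemma~\ref{rankformart_theo} and reduce everything to the coordinate-wise inequality $(a_i+b_i-c_i)^+\ge(a_i-c_i)^++(b_i-c_i)^+$, which is a clean and purely numerical substitute for the paper's splitting construction. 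Your observation that the argument actually delivers ${\rm rank}({\bf x^{a+b}})\ge r_1+r_2+1$ is correct; in fact the paper's splitting argument yields the same improvement if one starts with ${\rm degree}({\bf x^c})\le r_1+r_2+1$, since the two integer degrees cannot both exceed their respective targets. Your remark that the honesty hypothesis $a_i,b_i\ge 0$ is essential, together with the Laurent counterexample, is a useful addition that the paper does not make explicit.
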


\begin{proof}
Consider an arbitrary monomial ${\bf x^c}$ of degree at most ${\rm rank}({\bf x^a})+{\rm rank}({\bf x^b})$ 
such that ${\bf x^c}$ divides ${\bf x^a} \cdot {\bf x^b}$. The following formulas define
monomials ${\bf x^{c'}}$ and ${\bf x^{c''}}$ such that
${\bf x^{c'}}$ divides ${\bf x^a}$ and ${\bf x^{c''}}$ divides ${\bf x^b}$:
$$
c'_i \, =\,
\begin{cases} 
a_i & \text{if} ~c_i \geq a_i,\\ 
c_i & \text{otherwise},
\end{cases} \quad
\hbox{and} \qquad
c''_i \, =\,
\begin{cases}
c_i-a_i &\text{if}~c_i \geq a_i ,\\
0 & \text{otherwise}.
\end{cases}  
$$
By construction, ${\bf x^{c'}} \cdot {\bf x^{c''}}={\bf x^c}$. This implies that either 
${\rm degree}({\bf x^{c'}}) \leq {\rm rank}({\bf x^a})$ or ${\rm degree}({\bf x}^{c''}) \leq {\rm rank}({\bf x^b})$. In other words, either ${\bf x^a/x^{c'}}$ or ${\bf x^b/x^{c''}}$ is in $M$, and hence, their product
${\bf \frac{x^a \cdot x^b}{x^c}}$ is also in $M$. 
From this we infer the inequality (\ref{eq:superadd}) as follows:
Since ${\bf x^c}$ is an arbitrary monomial of degree less than or equal to
${\rm rank}({\bf x^a})+{\rm rank}({\bf x^b})$, and ${\bf x^c}$ divides ${\bf x^a} \cdot {\bf x^b}$,
we know that any monomial 
that ``defines'' the rank of ${\bf x^a} \cdot {\bf x^b}$  
\ (i.e.,  a monomial ${\bf x^d}$  of minimum degree
such that ${\bf x^d}$ divides ${\bf x^a} \cdot {\bf x^b}$ 
and $\frac{{\bf x^a} \cdot {\bf x^b}}{{\bf x^d}} \not\in M$) \
has degree strictly greater than
${\rm rank}({\bf x^a})+{\rm rank}({\bf x^b})$. Hence, 
${\rm rank}({\bf x^a} \cdot {\bf x^b}) \geq {\rm rank}({\bf x^a})+{\rm rank}({\bf x^b})$.
\end{proof}

\begin{corollary}{\rm ({\bf Clifford's Theorem})}
Let ${\bf x^b}$ be a monomial dividing ${\bf x^K}$
such that both ${\rm rank}({\bf x^{b}})$ and ${\rm rank}({\bf x^{K}/x^{b}})$ are non-negative. Then $\,{\rm rank}({\bf x^{b}}) \leq ({\rm degree}({\bf x^{b}})-1)/2$.
\end{corollary}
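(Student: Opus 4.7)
The plan is to mimic the standard proof of Clifford's theorem for curves, using the two main tools already available: the Riemann-Roch identity of Theorem \ref{RR_theo} and the superadditivity of rank established in Lemma \ref{revtri_lem}.

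First I would apply Lemma \ref{revtri_lem} to the pair $\mathbf{x}^{b}$ and $\mathbf{x}^{K}/\mathbf{x}^{b}$. Since both have non-negative rank by hypothesis, and their product is $\mathbf{x}^{K}$, superadditivity yields
\begin{equation*}
 {\rm rank}(\mathbf{x}^{b}) + {\rm rank}(\mathbf{x}^{K}/\mathbf{x}^{b}) \,\leq\, {\rm rank}(\mathbf{x}^{K}) \,=\, {\rm genus}(M) - 2,
\end{equation*}
where the last equality is Corollary \ref{cor:xK}.

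Next I would invoke the Riemann-Roch formula (\ref{eq:RRF}) for $\mathbf{x}^{b}$, which gives
\begin{equation*}
 {\rm rank}(\mathbf{x}^{K}/\mathbf{x}^{b}) \,=\, {\rm rank}(\mathbf{x}^{b}) - {\rm degree}(\mathbf{x}^{b}) + {\rm genus}(M) - 1.
\end{equation*}
Substituting this expression for ${\rm rank}(\mathbf{x}^{K}/\mathbf{x}^{b})$ into the previous inequality, the term ${\rm genus}(M)$ cancels and one is left with
\begin{equation*}
 2 \cdot {\rm rank}(\mathbf{x}^{b}) \,\leq\, {\rm degree}(\mathbf{x}^{b}) - 1,
\end{equation*}
which is exactly the desired bound after dividing by two.

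There is no real obstacle here: both ingredients (Riemann-Roch and superadditivity) are already in hand, and the argument is a direct algebraic manipulation. The only subtle point is the bookkeeping of constants, in particular the use of ${\rm rank}(\mathbf{x}^{K}) = {\rm genus}(M) - 2$ from Corollary \ref{cor:xK} rather than the ${\rm genus} - 1$ familiar from the curve setting; this is precisely the ``off by one'' shift anticipated after Corollary \ref{cor:xK} and it produces the slightly sharper bound $({\rm degree}(\mathbf{x}^{b}) - 1)/2$ stated in the corollary.
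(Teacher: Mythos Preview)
Your proof is correct and follows essentially the same route as the paper: combine the superadditivity inequality from Lemma~\ref{revtri_lem} (together with ${\rm rank}({\bf x^K})={\rm genus}(M)-2$ from Corollary~\ref{cor:xK}) with the Riemann-Roch identity, then eliminate ${\rm rank}({\bf x^K}/{\bf x^b})$. The only cosmetic difference is that the paper adds the two displayed relations and divides by two, whereas you substitute one into the other; the algebra is identical.
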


\begin{proof}
Lemma \ref{revtri_lem} and Corollary \ref{cor:xK} imply
$$ {\rm rank}({\bf x^b})+{\rm rank}({\bf x^{K}/x^{b}}) \,\leq\, {\rm rank}({\bf x^K})\,=\,{\rm genus}(M)-2 .$$
From the Riemann-Roch formula we have 
$$ {\rm rank}({\bf x^{b}})-{\rm rank}({\bf x^{K}/x^{b}})\,\,=\,\,{\rm degree}({\bf x^b})-({\rm genus}(M)-1). $$
The desired conclusion follows by adding these two identities and dividing by $2$.
\end{proof}

The construction of all Riemann-Roch monomial ideals of genus $g$ works as follows.
We first fix a monomial $\bf{x^K}$ with degree $2g-2$. 
Next we choose a set $\mathcal{M}$ of monomials of degree $g-1$ that divide ${\bf x^K}$.
Then there exists a unique artinian monomial ideal $M$ whose
socle is spanned by the monomials in  $\mathcal{M}$ and their
complements relative to ${\bf x^K}$:
\begin{equation}
\label{eq:buildRR}
 {\rm MonSoc} \quad = \quad \mathcal{M} \,\cup\, \{ \,{\bf x^{K}/x^{b}} \,: \,{\bf x^b} \in \mathcal{M \,} \}.
 \end{equation}
Namely, the ideal $M$ is the intersection of the  irreducible ideals 
$\langle x_1^{c_1+1},\dots,x_m^{c_m+1} \rangle $
where  ${\bf x^c}$ runs over the set ${\rm MonSoc}$.
Then $M$ is artinian, level, and reflection-invariant.

We shall now make the connection to the Riemann-Roch theorem for graphs.
As in Section 2, we let $G$ denote a saturated
graph on $n$ nodes, with $u_{ij} > 0$ edges between nodes $i$ and $j$, and
$M_G$ the initial monomial ideal in $\mathbb{K}[x_1,\ldots,x_{n-1}]$
of the toppling ideal $I_G$ with respect to
a reverse lexicographic term order having $x_n$ as smallest variable.

\begin{theorem} \label{thm:isRR} Let $G$ be a saturated graph with $n$ vertices,
$e$ edges,  and node $ i$ having degree $d_i$.
Then the monomial ideal $M_G$
is  Riemann-Roch with canonical monomial
\begin{equation}
\label{eq:m-n+2}
 \quad {\bf x^{K}} \,=\, \prod_{i=1}^{n-1}{x_i}^{d_i+u_{in}-2} 
\quad {\rm and} \quad {\rm genus}(M_G) \,=\, e-n+2 . 
\end{equation}
  \end{theorem}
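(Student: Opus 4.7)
The plan is to use the explicit description of the socle of $M_G$ from Corollary~\ref{gensocle_theo} and read off everything from it. By that corollary, the socle monomials are indexed by complete flags $\mathcal{T}: \emptyset \subset T_1 \subset \cdots \subset T_{n-1} = [n-1]$, equivalently by permutations $\sigma$ of $[n-1]$ with $T_s = \{\sigma(1),\ldots,\sigma(s)\}$. Writing $s_i := \sigma^{-1}(i)$ and using that $\bar T_s = \{\sigma(s+1),\ldots,\sigma(n-1),n\}$ shrinks as $s$ grows, the exponent of $x_i$ in $\mathrm{lcm}(\mathbf{x}^{T_1\to \bar T_1},\ldots,\mathbf{x}^{T_{n-1}\to \bar T_{n-1}})$ is attained at the smallest $s$ with $i\in T_s$, namely $s=s_i$. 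Dividing by $x_1\cdots x_{n-1}$, the exponent of $x_i$ in $\mathbf{s}_{\mathcal T}$ is
$$u_{in}+\sum_{j>s_i}u_{i,\sigma(j)}-1.$$

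Step one (\emph{level, with correct genus}). Summing the above exponents over $i\in[n-1]$, the first term contributes $\sum_i u_{in}$, which is the number of edges incident to $n$. The double sum $\sum_{i}\sum_{j>s_i}u_{i,\sigma(j)}$ counts each unordered pair $\{i,\ell\}\subset[n-1]$ exactly once (with the pair contributing $u_{i\ell}$), so it equals the number of edges with both endpoints in $[n-1]$. Together these contribute $e$, and subtracting $n-1$ for the division gives $\deg(\mathbf{s}_{\mathcal T}) = e-n+1$ independently of $\mathcal T$. Hence $M_G$ is level with $\mathrm{genus}(M_G) = e-n+2$.

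Step two (\emph{reflection invariance, with canonical monomial as claimed}). With $\mathbf{x}^K = \prod_{i=1}^{n-1} x_i^{d_i+u_{in}-2}$, the exponent of $x_i$ in $\mathbf{x}^K/\mathbf{s}_{\mathcal T}$ is
$$d_i+u_{in}-2-\bigl(u_{in}+\sum_{j>s_i}u_{i,\sigma(j)}-1\bigr) = u_{in}-1+\sum_{j<s_i}u_{i,\sigma(j)},$$
after using $d_i = u_{in}+\sum_{j\ne s_i}u_{i,\sigma(j)}$. Since $G$ is saturated we have $u_{in}\ge 1$, so these exponents are non-negative, showing $\mathbf{s}_{\mathcal T}\mid \mathbf{x}^K$. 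Let $\mathcal T'$ be the \emph{reversed} flag, corresponding to $\sigma'(j) = \sigma(n-j)$, so that $s'_i = n-s_i$. A direct substitution shows the exponent of $x_i$ in $\mathbf{s}_{\mathcal T'}$ is precisely $u_{in}-1+\sum_{j<s_i}u_{i,\sigma(j)}$, matching $\mathbf{x}^K/\mathbf{s}_{\mathcal T}$. Thus the map $\phi:\mathbf{x}^c\mapsto \mathbf{x}^K/\mathbf{x}^c$ sends $\mathbf{s}_{\mathcal T}$ to $\mathbf{s}_{\mathcal T'}$, is a well-defined involution on $\mathrm{MonSoc}(M_G)$ (flag reversal is manifestly involutive), and $M_G$ is reflection-invariant. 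Combined with the artinianness already established in Theorem~\ref{satminres_theo}, this proves $M_G$ is Riemann-Roch with the stated $\mathbf{x}^K$ and genus.

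The one subtle point — and the step most likely to trip up the write-up — is verifying $\mathbf{x}^K/\mathbf{s}_{\mathcal T} = \mathbf{s}_{\mathcal T'}$ cleanly. Everything hinges on the bookkeeping identity $d_i = u_{in} + \sum_{j<s_i}u_{i,\sigma(j)} + \sum_{j>s_i}u_{i,\sigma(j)}$, which decomposes the degree of node $i$ into its edge to $n$ and its edges to the nodes appearing before and after $i$ in the permutation. Once this decomposition is invoked, the exponent comparison is immediate, and the use of saturation is reduced to the single inequality $u_{in}\ge 1$ needed for divisibility by $\mathbf{x}^K$.
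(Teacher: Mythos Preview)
Your proof is correct and follows essentially the same approach as the paper: both use Corollary~\ref{gensocle_theo} to access the socle monomials, verify the common socle degree $e-n+1$, and exhibit the flag-reversal involution $\mathcal{T}\mapsto\mathcal{T}'$ satisfying $\mathbf{s}_{\mathcal{T}'}=\mathbf{x}^K/\mathbf{s}_{\mathcal{T}}$. The only difference is that the paper outsources the degree computation to~\cite{BCT} and asserts the involution identity~(\ref{eq:involution}) without calculation, whereas you carry out both steps explicitly via the permutation bookkeeping; your write-up is in that sense a fully self-contained version of the paper's argument.
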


\begin{proof}
The monomial ideal $M_G$ is artinian, and it is level 
because all the socle monomials $s_{\mathcal{T}}$ 
in Corollary \ref{gensocle_theo} have the same degree $e-n+1$.
This quantity is the cyclotomic number (or genus) of the graph $G$, which, by \cite{BCT},
coincides with the common degree of all maximal parking functions. 
There is a natural involution $\phi$ on the set of $(n-1)!$ maximal flags
$\mathcal{T}$ of subsets in $[n-1]$.  It takes a flag
$\mathcal{T}:T_1 \subset T_2 \subset \dots \subset T_{n-2} \subset T_{n-1}$ 
to the reverse flag $\phi(\mathcal{T}):T_{n-1}\backslash T_{n-2} \subset T_{n-1} \backslash T_{n-3} 
\subset \cdots \subset T_{n-1} \backslash T_1 \subset T_{n-1}$.
Using the identification between flags and socle monomials 
in Corollary \ref{gensocle_theo}, we have
\begin{equation}
\label{eq:involution}
 s_{\phi(\mathcal{T})} \,\,=\,\, {\bf x^K}/s_{\mathcal{T}}, 
 \end{equation}
where ${\bf x^K}$ is the monomial defined  in (\ref{eq:m-n+2}). Hence, $M_G$ is also reflection-invariant.
\end{proof}

\begin{remark} \rm
Not every Riemann-Roch monomial ideal arises as an initial monomial ideal $M_G$ for a connected graph $G$.
To see this, note that the number of socle monomials of $M_G$ is at most $m!$ where $m$ is the number of variables of $M_G$. On the other hand, 
  Riemann-Roch monomial ideals can in general   have more than $m!$ socle monomials.  Furthermore, the initial monomial ideal $M_G$ for a connected graph $G$
 is not necessarily Riemann-Roch. To see this, consider the four-cycle $C_4$. Its initial ideal is 
 $M_{C_4}=\langle x_1,x_2,x_3 \rangle^2$, with socle monomials $x_1$, $x_2$ and 
 $x_3$, and $M_{C_4}$ is not reflection-invariant. \qed \end{remark}

In the remainder of this section we show how
the familiar Riemann-Roch theorem for graphs is derived from Theorem \ref{thm:isRR}.
While the proof still assumes that $G$ is saturated, that hypothesis
will be removed in the next section. The following algebraic definitions
are valid for any undirected connected graph $G$ on $[n]$.
Here $G$ need not be saturated.

The Laurent polynomial ring 
$\mathbb{K}[{\bf x}^{\pm 1}] = \mathbb{K}[x_1^{\pm 1}, \ldots, x_n^{\pm 1}]$
is a module over the polynomial ring $\mathbb{K}[{\bf x}] = \mathbb{K}[x_1, \ldots, x_n]$.
The Laplacian lattice $\,{\rm image}_\mathbb{Z}(\Lambda_G)\,$
is a sublattice of rank $n-1$ in $\mathbb{Z}^n$,
and we write $L_G$ for the corresponding {\em lattice module},
as in \cite[Definition 9.11]{MilStr05}. Thus, $L_G$ is the
$\mathbb{K}[{\bf x}]$-submodule of $\mathbb{K}[{\bf x}^{\pm 1}] $
generated by all Laurent monomials
${\bf x^w}$ that have degree zero in the grading by
the group ${\rm Div}(G)$. If $G$ is saturated
then $L_G$ is generic and the Scarf complex
is a minimal free resolution by \cite[Theorem 9.24]{MilStr05}.
That Scarf complex is precisely the {\em Delaunay triangulation}
 in  \cite{AmiMan10}, and our point here is to redevelop the  Amini-Manjunath approach in the language of commutative algebra.

Consider the set of all Laurent monomials ${\bf x^c}$ that are in the
socle of the module $L_G$:
$$\,{\rm MonSoc}(L_G) \,= \, \bigl\{{\bf x^c} \not\in L_G \,|\,\,x_i {\bf x^c} \in L_G
\,\, \forall i  \,\bigr\}. $$
This socle is a set of Laurent monomials
on which the lattice $L_G$ acts with finitely many orbits,
so the computation of ${\rm MonSoc}(L_G)$ is a finite algorithmic problem,
as in \cite[\S 9.3]{MilStr05}.
The problem's solution is given by the
socle monomials   (\ref{eq:MGsoc})
of our monomial ideal $M_G$.

\begin{lemma}\label{Laplatsoc_theo}
The socle monomials of the lattice module $L_G$ are precisely  of the form
$\,s_{\mathcal{T}} \cdot {\bf x^{w}}/x_n\,$,  where  $\,s_{\mathcal{T}} \in {\rm MonSoc}(M_G)\,$
and ${\bf x^{w}}$ runs over the minimal generators of $L_G$. 
\end{lemma}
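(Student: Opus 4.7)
The plan is to establish the set equality by two inclusions, exploiting the Gr\"obner-basis structure from Theorem \ref{satminres_theo}. The key observation driving both directions is that, for a saturated graph, every Gr\"obner basis binomial ${\bf x}^{I \to [n]\setminus I} - {\bf x}^{[n]\setminus I \to I}$ with $I \subseteq [n-1]$ has its tail divisible by $x_n$, because $n \in [n]\setminus I$ and $u_{jn}>0$ for all $j \in I$. Consequently, any monomial in $\mathbb{K}[x_1,\dots,x_{n-1}]$ that lies in $M_G$ is equivalent, modulo the Laplacian lattice, to a non-negative monomial divisible by $x_n$.

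For the inclusion $\supseteq$, I would fix $s_{\mathcal{T}} \in {\rm MonSoc}(M_G)$ and a minimal generator ${\bf x^w}$ of $L_G$, and verify the three socle conditions for $s_{\mathcal{T}}{\bf x^w}/x_n$. Membership of $x_n \cdot s_{\mathcal{T}}{\bf x^w}/x_n = s_{\mathcal{T}}{\bf x^w}$ in $L_G$ is immediate. For $i \in [n-1]$, the hypothesis $x_i s_{\mathcal{T}} \in M_G$ combined with the key observation gives a non-negative monomial $g$ with $x_n \mid g$ that is equivalent to $x_i s_{\mathcal{T}}$ modulo the Laplacian lattice, so $x_i s_{\mathcal{T}}{\bf x^w}/x_n$ is in the same divisor class as $(g/x_n){\bf x^w} \in L_G$. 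Non-membership $s_{\mathcal{T}}{\bf x^w}/x_n \notin L_G$ I would show by contradiction: a non-negative representative would produce, after cancelling $s_{\mathcal{T}}$, a non-negative representative of the class $[{\bf x^w}]$ divisible by $x_n$, contradicting the minimality condition that $[{\bf w}-e_n]$ is not effective.

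For the inclusion $\subseteq$, let ${\bf x^c}$ be a socle monomial of $L_G$. Since $x_n {\bf x^c} \in L_G$ and $L_G$ is generated by its minimal generators, I can write $x_n {\bf x^c} = {\bf x^u}{\bf x^w}$ with ${\bf x^u} \in \mathbb{K}[{\bf x}]$ a monomial and ${\bf x^w}$ a minimal generator. The socle condition ${\bf x^c} \notin L_G$ forces $u_n = 0$, so ${\bf x^u} \in \mathbb{K}[x_1,\dots,x_{n-1}]$; applying the Gr\"obner-tail observation in reverse, the assumption ${\bf x^u} \in M_G$ would imply ${\bf x^c} \in L_G$, a contradiction, and hence ${\bf x^u} \notin M_G$. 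It then remains to show $x_i {\bf x^u} \in M_G$ for each $i \in [n-1]$, which combined with Corollary \ref{gensocle_theo} identifies ${\bf x^u}$ with some $s_{\mathcal{T}}$.

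The last step is the main obstacle. My approach is to take the factorization $x_i {\bf x^c} = {\bf x^{u^{(i)}}}{\bf x^{w^{(i)}}}$ (which exists because $x_i {\bf x^c} \in L_G$) and combine it with the factorization of $x_n {\bf x^c}$ to obtain the Laurent identity $x_i {\bf x^u}{\bf x^w} = x_n {\bf x^{u^{(i)}}}{\bf x^{w^{(i)}}}$. After replacing ${\bf x^w}$ and ${\bf x^{w^{(i)}}}$ by non-negative representatives of their effective classes and carefully comparing exponents, one should exhibit a non-negative representative of $[x_i {\bf x^u}]$ whose $x_n$-exponent is positive; by the characterization of the initial ideal under the chosen reverse lexicographic order, this is equivalent to $x_i {\bf x^u} \in M_G$. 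The subtlety will be in guaranteeing such a representative exists, for which I expect to use the saturated hypothesis that firing any nonempty $S \subseteq [n-1]$ strictly raises the $n$-th coordinate, so that $x_n$-divisible representatives always appear in the non-trivial divisor classes passing through a $G$-parking function.
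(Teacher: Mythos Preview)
Your approach is genuinely different from the paper's and largely sound. The paper argues structurally: since $G$ is saturated, the lattice module $L_G$ is generic, so its Scarf complex is the minimal free resolution; the proof of \cite[Theorem~5.2]{PeevaStr98} then identifies the $\mathbb{Z}^n$-degrees of the $n$-th syzygies of $L_G$ as exactly ${\bf u}+{\bf w}$, where ${\bf u}$ ranges over the $\mathbb{Z}^{n-1}$-degrees of the $(n{-}1)$-st syzygies of $M_G$ and ${\bf w}$ over the lattice. Subtracting $e_1+\cdots+e_n$ (resp.\ $e_1+\cdots+e_{n-1}$) converts top-syzygy degrees into socle degrees, and the lemma drops out in one line. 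Your hands-on two-inclusion argument avoids the Scarf machinery and the external citation entirely, which is more elementary and closer in spirit to what is needed for non-saturated $G$ in Section~4.

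One step, however, does not work as written. In the non-membership argument you claim that a non-negative representative of $[s_{\mathcal T}{\bf x^w}/x_n]$ would, ``after cancelling $s_{\mathcal T}$,'' yield a non-negative representative of $[{\bf x^w}]$ divisible by $x_n$. Cancellation does not preserve non-negativity, so this inference fails. The fix uses the very mechanism you already isolated. Membership in $L_G$ is lattice-invariant, so it suffices to show $s_{\mathcal T}/x_n\notin L_G$. If it were in $L_G$, some ${\bf v}\ge 0$ would satisfy ${\bf v}+e_n\equiv s_{\mathcal T}$ modulo the lattice; but every Gr\"obner reduction step replaces a leading term (with $x_n$-exponent $0$) by a tail divisible by $x_n$, so the $x_n$-exponent is monotone under reduction and the standard representative of this class has $x_n$-exponent $\ge 1$. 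That contradicts the fact that $s_{\mathcal T}$, with $x_n$-exponent $0$, is itself standard. Exactly the same monotonicity disposes of your ``main obstacle'' in the $\subseteq$ direction with no residual subtlety: from $x_i{\bf x^u}\equiv x_n{\bf x^{u^{(i)}}}$ you already possess a non-negative representative with positive $x_n$-exponent, so the standard representative also has positive $x_n$-exponent, and hence $x_i{\bf x^u}$ (whose $x_n$-exponent is $0$) must lie in ${\rm in}(I_G)\cap\mathbb{K}[{\bf x}_{\backslash n}]=M_G$.
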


\begin{proof}
Since $G$ is assumed to be saturated throughout this section, the lattice module $L_G$ is generic in the
sense of \cite[Definition 9.23]{MilStr05},
with $M_G$ being the reverse lexicographic initial ideal of the corresponding
lattice ideal $I_G$.
We claim that the stated characterization of the socle is valid
for any generic lattice module that is artinian. Indeed, by the proof of
 \cite[Theorem 5.2]{PeevaStr98},
the $\mathbb{Z}^n$-degrees of the $n$-th syzygies of $L_G$ are
the vectors ${\bf u}+{\bf w}$, where ${\bf u} $ runs over the $\mathbb{Z}^{n-1}$-degrees
of the $(n-1)$-st syzygies of $M_G$ and ${\bf w}$ is any vector in the lattice.
The socle degrees of $M_G$ are the vectors 
${\bf u}-e_1 - \cdots - e_{n-1}$ in $\mathbb{Z}^{n-1}$,
and the socle degrees of $L_G$ are the vectors
${\bf u}+{\bf w} - e_1 \cdots - e_{n-1} - e_n$ in $ \mathbb{Z}^n$.
\end{proof}

We now identify Laurent monomials
${\bf x^u}$ with divisors on the graph $G$.
The $i$-th coordinate $u_i$ of the exponent vector
${\bf u}$ is the multiplicity of
node $i$ in the divisor ${\bf x^u}$.
The {\em degree} of the divisor ${\bf x^u}$ is
its total degree as a monomial,
$\,{\rm degree}({\bf x^u}) = u_1 + \cdots + u_n$. 
The {\em rank} of the divisor ${\bf x^u}$ is 
defined by the same formula (\ref{rank_form}) as
 in Lemma \ref{rankformart_theo}:
\begin{equation} \label{rank_form2}
                 {\rm rank}({\bf x^u}) \quad =\,\min_{{\bf x^c} \in {\rm MonSoc}(L_G)}{\rm degree}^{+}({\bf u-c})-1.
\end{equation}
Thus, ${\rm rank}({\bf x^u}) \geq 0 $ if and only if ${\bf x^u}$ lies in $L_G$.  Our definition of rank of the divisor ${\bf x^u}$ coincides with the rank of ${\bf u}$ as in \cite{BakNor07}. To see this, use Lemma 2.7 in \cite{BakNor07} and note that  the exponents of the socle Laurent monomials of $L_G$ are  the elements of the set $\mathcal{N}$ in~\cite{BakNor07}.

We finally define the {\em canonical divisor} of $G$ to be the monomial
$$ {\bf x^k} \,\, = \,\, x_1^{d_1-2} x_2^{d_2-2} \cdots \, x_n^{d_n-2}, $$
where $d_i = \sum_{j\not=i} u_{ij}$ is the degree of node $i$.
Finally, we recall that the genus of  $G$ is $e-n+1$, where $e$ is the number of edges.
The following is precisely \cite[Theorem 1.12]{BakNor07}:

\begin{theorem}[Baker-Norine] \label{thm:BN}
Riemann-Roch holds for any divisor ${\bf x^u}$ on the graph $G$:
 \begin{equation} \label{eq:RRF2}
{\rm rank}({\bf x^{u}})-{\rm rank}({\bf x^k}/{\bf x^{u}})\,\,=\,\,{\rm degree}({\bf x^{u}})- {\rm genus}(G) +1.
\end{equation}
\end{theorem}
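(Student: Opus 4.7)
The plan is to deduce the Baker--Norine formula (\ref{eq:RRF2}) from the monomial Riemann--Roch formula (\ref{eq:RRF}) applied to $M_G$, which is Riemann--Roch by Theorem \ref{thm:isRR}. The bridge is Lemma \ref{Laplatsoc_theo}, and the main challenge is reconciling the $n$-variable lattice picture ($L_G$, graph genus $g = e - n + 1$) with the $(n-1)$-variable monomial picture ($M_G$, of genus $g + 1$).

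First I would verify that the divisor rank (\ref{rank_form2}) is constant on linear equivalence classes: ${\rm MonSoc}(L_G)$ is $L$-invariant because its defining condition depends only on $L_G$ itself, so adding $\mathbf{l}\in L$ to $\mathbf{u}$ merely reindexes the minimum. Combined with Lemma \ref{Laplatsoc_theo}, this gives the parametrization
$$
{\rm MonSoc}(L_G) \,\,=\,\, \bigl\{\, \mathbf{c}_{\mathcal{T}} - e_n + \mathbf{l} \,:\, \mathcal{T} \text{ a complete flag of } [n-1],\ \mathbf{l} \in L\,\bigr\},
$$
where $\mathbf{c}_{\mathcal{T}} \in \mathbb{Z}^{n-1} \subset \mathbb{Z}^n$ is the exponent of $s_{\mathcal{T}} \in {\rm MonSoc}(M_G)$ viewed with last coordinate zero. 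Substituting into (\ref{rank_form2}) and separating off the $n$-th coordinate should recast ${\rm rank}(\mathbf{x}^\mathbf{u})$ as a minimum, over $\mathbf{l}\in L$, of ${\rm rank}_{M_G}$ applied to the projection of $\mathbf{u}-\mathbf{l}+e_n$ onto the first $n-1$ coordinates, plus a correction in the $n$-th coordinate; the minimization over $\mathbf{l}$ can be absorbed by passing to a convenient linear-equivalence representative.

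Next I would relate the canonical divisor $\mathbf{x}^k$ of $G$ to the canonical monomial $\mathbf{x}^K$ of $M_G$. A direct computation with the $n$-th firing move $\Lambda_G e_n = (-u_{1n}, \ldots, -u_{n-1,n}, d_n)$ yields
$$
\mathbf{x}^K \,\,=\,\, \mathbf{x}^k \cdot x_n^2 \cdot \mathbf{x}^{-\Lambda_G e_n},
$$
so $\mathbf{x}^K$ and $\mathbf{x}^k \cdot x_n^2$ are linearly equivalent, hence have the same rank. This is consistent with degrees: ${\rm degree}(\mathbf{x}^K) = 2g$, ${\rm degree}(\mathbf{x}^k) = 2g-2$, and the difference is exactly ${\rm degree}(x_n^2) = 2$.

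Finally, I would apply Theorem \ref{RR_theo} to $M_G$, translate back using the two preceding steps, and use the identifications ${\rm genus}(M_G) = g+1$ and $\mathbf{x}^K \equiv \mathbf{x}^k\cdot x_n^2$ to convert the monomial identity (\ref{eq:RRF}) into the graph identity (\ref{eq:RRF2}). The hard part will be this concluding bookkeeping: the dimension drop $n \mapsto n-1$, the $-e_n$ shift appearing in Lemma \ref{Laplatsoc_theo}, the genus shift by $+1$, and the $x_n^2$ factor between $\mathbf{x}^K$ and $\mathbf{x}^k$ must all conspire to cancel precisely, yielding the clean identity in (\ref{eq:RRF2}). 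One should in particular verify that the $\max(\cdot,0)$ contribution from splitting off the $n$-th coordinate of ${\rm degree}^+$ is annihilated after taking the difference ${\rm rank}(\mathbf{x}^\mathbf{u}) - {\rm rank}(\mathbf{x}^k/\mathbf{x}^\mathbf{u})$, leaving only the linear contribution ${\rm degree}(\mathbf{x}^\mathbf{u}) - g + 1$.
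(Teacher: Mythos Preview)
Your plan is viable and the bookkeeping does conspire, but the route is genuinely different from the paper's. The paper does not reduce to Theorem~\ref{RR_theo} applied to $M_G$. Instead it observes that the proof of Theorem~\ref{RR_theo} uses only three ingredients --- the rank formula, levelness, and reflection invariance --- and then verifies all three directly for the lattice module $L_G$: Lemma~\ref{Laplatsoc_theo} forces every socle monomial of $L_G$ to have degree $g-1$, and an explicit involution on ${\rm MonSoc}(L_G)$ (flag reversal $\mathcal{T}\mapsto\phi(\mathcal{T})$ combined with $\mathbf{w}\mapsto-\mathbf{w}$ and a fixed lattice shift) supplies reflection invariance. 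The computation of Theorem~\ref{RR_theo} is then rerun verbatim with $L_G$ in place of $M$.

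Your reduction via $M_G$ also works, with one correction: the sentence ``the minimization over $\mathbf{l}$ can be absorbed by passing to a convenient linear-equivalence representative'' is the wrong mechanism. You cannot fix a single $\mathbf{l}$. Writing $\mathbf{b}=(\mathbf{u}-\mathbf{l})_{\backslash n}$ and $t=(\mathbf{u}-\mathbf{l})_n$, one has
\[
{\rm rank}_{L_G}(\mathbf{x}^{\mathbf{u}})\;=\;\min_{\mathbf{l}\in L}\bigl[\,{\rm rank}_{M_G}(\mathbf{x}^{\mathbf{b}})+\max(t{+}1,0)\,\bigr],
\]
and after the substitution $\mathbf{l}'=\Lambda_G e_n-\mathbf{l}$ the analogous expression for ${\rm rank}_{L_G}(\mathbf{x}^{\mathbf{k}-\mathbf{u}})$ becomes $\min_{\mathbf{l}}\bigl[{\rm rank}_{M_G}(\mathbf{x}^{\mathbf{K}-\mathbf{b}})+\max(-t{-}1,0)\bigr]$. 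Now apply Theorem~\ref{RR_theo} to $\mathbf{x}^{\mathbf{b}}$ and use the one-variable identity $\max(t{+}1,0)-\max(-t{-}1,0)=t{+}1$ together with $|\mathbf{b}|=|\mathbf{u}|-t$: the second minimum becomes the \emph{same} function of $\mathbf{l}$ as the first, shifted by the constant $-|\mathbf{u}|+g-1$. Subtraction gives (\ref{eq:RRF2}) directly. What each route buys: the paper's argument is shorter and transfers unchanged to Section~4, where $G$ is not saturated and $M_G$ is typically not Riemann--Roch while $L_G$ still is; your route makes the numerical reconciliation between the two Riemann--Roch formulas (the genus shift, the $x_n^2$, the $-e_n$) completely explicit.
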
 

\begin{proof}
It follows from Lemma \ref{Laplatsoc_theo} that all socle monomials of $L_G$ have degree equal to the genus of $G$ minus one.
 The lattice module $L_G$ is also reflection-invariant, in the sense that
$ {\bf x^u} \in  {\rm {MonSoc}}(L_G)$ implies 
 $\,{\bf x}^{\bf k}/{\bf x^u} \in {\rm {MonSoc}}(L_G)$.
 Using the representation in Lemma~\ref{Laplatsoc_theo}, 
 the resulting involution $\phi$ on  ${\rm {MonSoc}}(L_G)$
 can be written as follows:
 \begin{equation} \phi((s_{\mathcal{T}}/x_n)\cdot {\bf x^{w}}) \quad 
\, =\, \quad (s_{\phi(\mathcal{T})}/x_n)\cdot {\bf x^{-w}} \cdot \left(\frac{x_n^{d_n}}{\prod_{i=1}^{n-1}x_i^{u_{in}}}  \right),
\end{equation} 
where $\phi(\mathcal{T})$ denotes the reverse flag as in (\ref{eq:involution}). Note that the image of $\phi$ is in ${\rm {MonSoc}}(L_G)$, since  $\,x_n^{d_n}/ \prod_{i=1}^{n-1}x_i^{u_{in}}$ is in $  L_G$.
The proof of Theorem \ref{thm:BN} is now entirely analogous to that
of Theorem~\ref{RR_theo}. In other words, 
our argument 
for the validity of the Riemann-Roch 
formula for reflection-invariant artinian level monomial ideals
generalizes in a straightforward manner 
to reflection-invariant artinian level lattice modules.
\end{proof}

\begin{remark} \label{rmk:recon} \rm
The rank of the canonical monomial of $M_G$ equals
the rank of the canonical divisor of the graph $G$, but the degree of the former
is two more than that  of the latter. \qed
\end{remark}

\section{Non-Saturated Graphs}

We turn to graphs $G$ that are not necessarily saturated.
The binomials in (\ref{eq:IJbinomial}), their syzygies in (\ref{eq:cycres2}),
and the ideals $I_G$ and $M_G$ are still well-defined.
However, the minimality in Theorem \ref{satminres_theo}
is no longer true, and the choice of
term order is more subtle, as the next example~shows.

\begin{example} \rm
Let $G$ be the edge graph of a triangular prism, labeled so that $I_G$ equals
$$[\langle
a^3-bcd, b^3-ace,c^3-abf, 
d^3-aef, 
e^3-bdf, f^3-cde
\rangle: \langle abcdef \rangle^{\infty}].$$
This toppling ideal has $22$ minimal generators, and its  free resolution has
 Betti numbers  $(1,22,92,147,102,26)$. The same holds for the 
 ideal that represents  parking~functions:
  $$ \begin{matrix} M_G \,\,\, = \,\,\, {\rm in}(I_G) \,\, \, = 
& \bigl\langle\, a^3, a b c,a c e^2,b^3, b^2 e^2,c^3,   c d e, d^3,d^2 e^2,  e^3 \,\,
 a^2 b^2,a^2 c^2,    a^2 d^2, \\  &  \quad a b d e, b^2 c^2, b c d^2,  \,\,
 a^2 b e^2,  a^2 d e^2,a b^2 d^2,a c^2 d^2,   b^2 d^2 e,  b c^2 e^2
\,\bigr\rangle .\\
\end{matrix}
$$
Here the reverse lexicographic term order with 
$a {>} b {>} c {>} d {>} e {>} f$ was used.
However, if we take 
$e{>}d{>}c{>}b{>}a{>}f$ then 
the reverse lexicographic
   Gr\"obner basis requires two more binomials.
 Now, the initial monomial ideal has
  Betti numbers  $(1,24,98,153,104,26)$. 
\qed
\end{example}

To explain the phenomenon in this example,
we fix a spanning tree $T$ of the graph $G$ that is rooted at the node $n$,
and we order the unknowns according to a linear extension of $T$.
  Thus, we fix an ordering of $[n]$ such that $i>j$ if the node $i$ is a descendant of the node $j$ in $T$.
  A term order on $\mathbb{K}[{\bf x}]$
  is a {\em spanning tree order} if it is a reverse
    lexicographic term order whose variable ordering is compatible
    with some spanning tree rooted at $n$.
    One spanning tree order is the toppling order considered in \cite[Theorem 10]{CorRosSal02}. 
    See also \cite[\S 5]{Perkinson} for a discussion of Gr\"obner bases of toppling ideals
    in the inhomogeneous case.
    
\begin{theorem}\label{nonsatinit_theo}
The toppling ideal $I_G$ is generated by the binomials
${\bf x}^{I \rightarrow J} - {\bf x}^{J \rightarrow I} $
where $(I,J)$ runs over splits of $[n]$ such that
the  subgraphs of $G$ induced on $I$ and $J$ are connected.
For any spanning tree order,  these binomials form a Gr\"obner basis 
of $I_G$ with initial monomial ideal $M_G$. 
The complexes constructed in (\ref{eq:cycres1})
are cellular free resolutions.
\end{theorem}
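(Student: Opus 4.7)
The plan is to establish the three assertions in sequence. First, each split binomial ${\bf x}^{I \rightarrow J} - {\bf x}^{J \rightarrow I}$ lies in $I_G$ by the same Laplacian computation as in the proof of Theorem~\ref{satminres_theo}, whether or not $G|_I$ and $G|_J$ are connected. To reduce disconnected to connected splits, suppose $G|_I$ is disconnected with $I = I_1 \sqcup I_2$ and no edges of $G$ between $I_1$ and $I_2$. Then ${\bf x}^{I \rightarrow J} = {\bf x}^{I_1 \rightarrow J} \cdot {\bf x}^{I_2 \rightarrow J}$ and ${\bf x}^{J \rightarrow I} = {\bf x}^{J \rightarrow I_1} \cdot {\bf x}^{J \rightarrow I_2}$, giving the telescoping identity
\[
{\bf x}^{I \rightarrow J} - {\bf x}^{J \rightarrow I} \,=\, \bigl({\bf x}^{I_1 \rightarrow J} - {\bf x}^{J \rightarrow I_1}\bigr) \cdot {\bf x}^{I_2 \rightarrow J} + {\bf x}^{J \rightarrow I_1} \cdot \bigl({\bf x}^{I_2 \rightarrow J} - {\bf x}^{J \rightarrow I_2}\bigr).
\]
Induction on the total number of connected components of $G|_I$ and $G|_J$ expresses every split binomial as an element of the ideal generated by connected-split binomials.

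For the Gr\"obner basis claim, I would fix a spanning tree order derived from a spanning tree $T$ rooted at $n$. The critical property, already used in \cite[Theorem 10]{CorRosSal02} and discussed in \cite[\S 5]{Perkinson}, is that for every connected split $(I, J)$ with $n \in J$, the initial term of ${\bf x}^{I \rightarrow J} - {\bf x}^{J \rightarrow I}$ is ${\bf x}^{I \rightarrow J}$. Let $N_G$ denote the monomial ideal generated by these initial terms; then $N_G \subseteq {\rm in}(I_G) = M_G$, where the right-hand equality holds because $M_G$ (the tree ideal of (\ref{eq:MG})) has $T_G$ standard monomials by \cite{PosSha} and ${\rm in}(I_G)$ has the same colength (the index of the Laplacian lattice in its saturation is $T_G$). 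For the reverse inclusion $M_G \subseteq N_G$, I would factor every generator ${\bf x}^{I' \rightarrow [n] \setminus I'}$ of $M_G$ over the connected components of $G|_{I'}$ to reduce to the case where $I'$ itself is connected; then letting $J_0$ be the component of $[n] \setminus I'$ containing $n$ and $I^+ = [n] \setminus J_0$, the pair $(I^+, J_0)$ forms a connected split (every other component of $[n] \setminus I'$ attaches to $I'$ via $G$-edges by the connectedness of $G$) and ${\bf x}^{I^+ \rightarrow J_0}$ divides ${\bf x}^{I' \rightarrow [n] \setminus I'}$. Hence $N_G = M_G$, the connected-split binomials form a Gr\"obner basis, and generation of $I_G$ follows.

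For the cellular resolution claim, $\mathcal{CYC}_G$ is supported on the barycentric subdivision of the $(n-2)$-simplex with monomial labels determined by the $u_{ij}$, and in the saturated case it coincides with the Scarf complex by Theorem~\ref{satminres_theo}. For general $G$ I would degenerate from a saturated graph $G_\epsilon$ obtained by replacing each zero multiplicity with a small positive weight, and transfer exactness from $\mathcal{CYC}_{G_\epsilon}$ to $\mathcal{CYC}_G$ via upper-semicontinuity of Betti numbers in the resulting flat family; alternatively, one verifies the Bayer--Sturmfels acyclicity criterion directly on the labeled barycentric subdivision, checking that each graded strand is $\mathbb{K}$-acyclic even when some edge multiplicities vanish. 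Either approach yields that $\mathcal{CYC}_G$ is a (generally non-minimal) cellular free resolution, with the analogous argument resolving $\mathbb{K}[{\bf x}_{\backslash n}]/M_G$.

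The main obstacle is the initial-term assertion at the start of the second paragraph: the prism example preceding the theorem shows that arbitrary reverse lexicographic orders fail, so one must genuinely exploit the nesting structure of the rooted spanning tree $T$, tracing a path from $n$ through $J$ until one meets a vertex adjacent in $G$ to $I$ and verifying that the smallest such vertex lies on the $J$-side of the cut. The remaining steps --- the telescoping reduction, the colength comparison, and the cellular-resolution degeneration --- are either routine or standard in the commutative-algebra literature.
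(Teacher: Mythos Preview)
Your outline follows the paper's strategy closely, and the extra detail you supply---the telescoping reduction and the $M_G \subseteq N_G$ factorization argument---makes explicit what the paper delegates to \cite{CorRosSal02}. (The telescoping induction does terminate: since $G$ is connected and $I_1$, $I_2$ share no edges, every component of $I_2$ must meet $J$, so the new split $(I_1,\, I_2 \cup J)$ has total component count at most $1 + c_J < c_I + c_J$; similarly for $(I_2,\, I_1 \cup J)$.) Two points need adjustment, however.

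In the Gr\"obner-basis paragraph you assert ``$N_G \subseteq {\rm in}(I_G) = M_G$'' and justify the equality solely by matching colengths. Equal colength of two artinian ideals does not imply equality without a containment. Reorder the argument: first establish $M_G \subseteq N_G$ via your connected-component factorization, combine with $N_G \subseteq {\rm in}(I_G)$ to obtain $M_G \subseteq {\rm in}(I_G)$, and only \emph{then} invoke the colength comparison to conclude $M_G = {\rm in}(I_G) = N_G$. The paper sidesteps this by asserting the leading-term property for \emph{all} splits at once, obtaining $M_G \subseteq {\rm in}(I_G)$ directly from the definition~(\ref{eq:MG}); your restriction to connected splits is legitimate but necessitates the extra $M_G \subseteq N_G$ step.

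For the cellular resolution, ``upper-semicontinuity of Betti numbers in the resulting flat family'' does not transfer exactness from $\mathcal{CYC}_{G_\epsilon}$ to $\mathcal{CYC}_G$: homology of a complex is upper-semicontinuous under specialization, so it can only jump \emph{up}, which is the wrong direction for preserving exactness. Your second alternative, the Bayer--Sturmfels acyclicity criterion, is the correct mechanism, and it is exactly what the paper invokes via \cite[Theorem~6.24]{MilStr05} for $M_G$ and \cite[Corollary~9.18]{MilStr05} for $I_G$ through the lattice module $L_G$: one keeps the simplicial complex of the Scarf resolution of the generic deformation $G_\epsilon$ but relabels its faces with the monomials coming from $G$, and checks that the resulting labeled complex still resolves.
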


\begin{proof}
The first paragraph in the proof of Theorem \ref{satminres_theo} is valid in the
non-saturated case. It shows that the binomials ${\bf x}^{I \rightarrow J} - {\bf x}^{J \rightarrow I} $
lie in $I_G$. For the spanning tree term order, the leading monomials
are ${\bf x}^{I \rightarrow J}$, where $n \in J \backslash I$, and hence
the initial ideal ${\rm in}(I_G)$ contains
the monomial ideal $M_G$ of (\ref{eq:MG}).
Again, both ideals are artinian of the same colength in
 $\mathbb{K}[{\bf x}_{\backslash n}] $, and hence they are equal.
 This establishes the Gr\"obner basis property.
The argument in the proof of  \cite[Theorem 14]{CorRosSal02}
shows that the property that the subgraphs of $G$ induced on $I$ and $J$
are connected
characterizes a minimal Gr\"obner basis of $I_G$ and the
minimal generators of $M_G$. In particular, these binomials
${\bf x}^{I \rightarrow J} - {\bf x}^{J \rightarrow I} $ generate $I_G$.

Our last assertion states that 
(\ref{eq:cycres1}) with differentials (\ref{eq:cycres2})
gives a free resolution of $I_G$,
and dropping the last term in  (\ref{eq:cycres2}) 
gives a free resolution of $M_G$.
This claim is proved by deformation to
generic monomial modules, as explained in
\cite[\S 6.2]{MilStr05}. To be precise, in our situation
we replace $G$ by a nearby saturated graph $G_\epsilon$
with fractional edge numbers $u_{ij}(\epsilon)$ between any pair of nodes.
The monomial ideal $M_{G_\epsilon}$ is generic and degenerates to $M_G$.
The lattice ideals $I_G$ and $I_{G_\epsilon}$ are represented 
by the corresponding lattice modules $L_G$ and $L_{G_\epsilon}$.
These are submodules of the Laurent polynomial ring 
as in  \cite[Definition 9.11]{MilStr05}. The lattice module $L_{G_\epsilon}$ is generic and degenerates to $L_{G}$.
According to \cite[Theorem 6.24]{MilStr05},
the Scarf complex of $M_{G_\epsilon}$ {\em with labels from $G$}
gives a free resolution of $M_G$.
 Likewise, the Scarf complex of the generic lattice module $L_{G_\epsilon}$ {\em with labels from $G$}
gives a free resolution of $L_G$.
The resulting minimal free resolution of $I_{G_\epsilon}$ 
degenerates to 
a (typically non-minimal) resolution of $I_G$, using
 \cite[Corollary 9.18]{MilStr05}.
These free resolutions of $M_G$ and $I_G$ are
cellular because they are given by labeled simplicial complexes.
\end{proof}

Here is an example that illustrates the degeneration used in the proof above.

\begin{example} \rm
Let $n=4$ and $G$ be the $4$-cycle $1 - 2 - 3 - 4 - 1 $. For 
$\delta, \epsilon \in \mathbb{N} $ consider the graph $G_{\delta,\epsilon}$ 
that has $\delta$ edges for every edge in $G$ and
$\epsilon $ edges for every non-edge of $G$.
Then $G_{\delta,\epsilon}$ is saturated for $\delta,\epsilon > 0$. The
Scarf complex in Example \ref{ex:ressi} gives the minimal free resolution
of   $M_{G_{\delta,\epsilon}} = {\rm in}(I_{G_{\delta,\epsilon}})$
and this lifts to the minimal free resolution of $I_{G_{\delta,\epsilon}}$. By
Theorem \ref{thm:isRR}, the monomial ideal $M_{G_{\delta,\epsilon}}$ 
is Riemann-Roch, its genus is $\,4 \delta + 2 \epsilon - 2 $, and its canonical monomial equals
$\,{\bf x^K} = x_1^{3 \delta + \epsilon - 2} x_2^{2 \delta + 2\epsilon -2} x_3^{3 \delta + \epsilon -2 }$.
The involution ${\bf x^b} \mapsto {\bf x}^{{\bf K}-{\bf b}}$
on its six socle monomials is given by swapping the two rows below:
\begin{equation}
 {\rm MonSoc}(M_{G_{\delta,\epsilon}}) \, = \,
\left\{
\begin{matrix} \,
x_1^{\delta-1} x_2^{\delta+\epsilon-1} x_3^{2 \delta+\epsilon-1}, & \!\!
x_1^{\delta-1} x_2^{2 \delta+\epsilon-1} x_3^{\delta+\epsilon-1},& \!\!
x_1^{\delta+\epsilon-1} x_2^{2\delta+\epsilon-1} x_3^{\delta-1} \, \\ 
x_1^{2 \delta+\epsilon-1} x_2^{\delta+\epsilon-1} x_3^{\delta-1}, &  \!\!
x_1^{2 \delta+\epsilon-1} x_2^{\epsilon-1} x_3^{2 \delta-1} , &\!\!
x_1^{2 \delta-1} x_2^{\epsilon-1} x_3^{2\delta+\epsilon-1} 
\end{matrix} \!
\right\}.
\end{equation}
Setting $\delta = 1$ and $\epsilon = 0$, we get the 
parking function monomial ideal of the $4$-cycle
$$ M_{G_{1,0}} \, = \, M_G \, = \, \langle x_1,x_2,x_3 \rangle^2 \,= \,
\langle x_1^2, x_1 x_2, x_1 x_3, x_2^2, x_2 x_3, x_3^2 \rangle .$$
Here, ${\rm MonSoc}(M_G) =  \{x_1,x_2,x_3\}$. This ideal is
not reflection-invariant and hence not Riemann-Roch.
The cellular resolution of $M_G$ induced from $M_{G_{\delta,\epsilon}}$
is not minimal.
\qed
\end{example}

We now take a closer look at the 
combinatorial structure of our resolutions.
Let ${\rm Bary}(G)$ denote the first barycentric subdivision of
the $(n-2)$-simplex, whose $2^{n-1}-1$ vertices, namely the non-empty subsets $I$ of $[n-1]$,
are labeled by the corresponding monomials ${\bf x}^{I \rightarrow [n]\backslash I}$.
Thus, ${\rm Bary}(G)$ is the cellular free resolution of $M_G = {\rm in}(I_G)$
referred to in Theorem  \ref{nonsatinit_theo}.
Each simplex in ${\rm Bary}(G)$ is labeled by the least common
multiple of the monomials that label its vertices. For any ${\bf c} \in \mathbb{N}^{n-1}$ we write
${\rm Bary}(G)_{\prec {\bf c}}$ for the subcomplex
consisting of all simplices in ${\rm Bary}(G)$ whose labels properly divide ${\bf x^c}$.

\begin{corollary} \label{cor:minsyz1}
The number of minimal $i$-th syzygies of the monomial ideal $M_G$ in degree ${\bf c}$
is equal to the rank of the reduced homology group
$\tilde{H}_{i-1}({\rm Bary}(G)_{\prec {\bf c}};\mathbb{K})$.
\end{corollary}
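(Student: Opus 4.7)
My plan is to reduce the corollary to a standard formula computing the minimal Betti numbers of a monomial ideal from any of its cellular free resolutions. The substantive geometric input has already been supplied by Theorem~\ref{nonsatinit_theo}, which tells us that ${\rm Bary}(G)$ supports a cellular free resolution $F_\bullet$ of $M_G$, each of whose faces is labeled by the least common multiple of the vertex labels ${\bf x}^{I \rightarrow [n]\setminus I}$. What remains is to express $\beta_{i,{\bf c}}(M_G)$ in terms of the lower-order subcomplexes ${\rm Bary}(G)_{\prec {\bf c}}$.

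The general fact I would invoke, essentially contained in \cite[\S1.4 and Theorem~4.7]{MilStr05}, is that whenever a labeled cell complex $X$ supports a cellular free resolution $F_\bullet$ of a monomial ideal $M$ in a polynomial ring $S$, one has
$$
 \beta_{i,{\bf c}}(M) \,\,=\,\, \dim_\mathbb{K} \tilde{H}_{i-1}\bigl(X_{\prec {\bf c}};\,\mathbb{K}\bigr) \qquad \hbox{for every multidegree } {\bf c}.
$$
Specializing this to $X = {\rm Bary}(G)$ and $M = M_G$ produces the corollary. To derive the formula I would tensor $F_\bullet$ with the residue field $\mathbb{K}$ and pass to multidegree ${\bf c}$. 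Because each basis element of $F_i$ corresponding to an $i$-dimensional face $\sigma$ sits in multidegree $\mathrm{label}(\sigma)$, the piece $(F_\bullet \otimes_S \mathbb{K})_{\bf c}$ has a $\mathbb{K}$-basis indexed precisely by those $i$-faces of $X$ with label equal to ${\bf c}$, and its differential is the cellular boundary of $X$. Hence this chain complex coincides with the relative cellular chain complex $C_*(X_{\preceq {\bf c}}, X_{\prec {\bf c}};\,\mathbb{K})$. Since $X$ is a cellular resolution, each closed subcomplex $X_{\preceq {\bf c}}$ is either empty or $\mathbb{K}$-acyclic (this is the defining property via \cite[Proposition~4.5]{MilStr05}), so the long exact sequence of the pair $(X_{\preceq {\bf c}},\,X_{\prec {\bf c}})$ collapses to an isomorphism $H_i\bigl(X_{\preceq {\bf c}}, X_{\prec {\bf c}};\mathbb{K}\bigr) \cong \tilde{H}_{i-1}(X_{\prec {\bf c}};\mathbb{K})$. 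The left-hand side equals $\mathrm{Tor}^S_i(M, \mathbb{K})_{\bf c}$, whose $\mathbb{K}$-dimension is by definition $\beta_{i,{\bf c}}(M)$.

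I do not anticipate any real obstacle: the geometric content was already delivered by Theorem~\ref{nonsatinit_theo}, and what remains is a formal $\mathrm{Tor}$-computation combined with a standard long-exact-sequence argument. The only points that require minor care are (i) the indexing convention that $i$-dimensional faces of $X$ correspond to the module $F_i$ in the resolution of $M_G$ as an ideal (rather than of $S/M_G$), which is exactly why the reduced homology appears one dimension below the syzygy index, and (ii) the boundary multidegrees ${\bf c}$ for which $X_{\prec {\bf c}}$ is empty, where the convention $\tilde{H}_{-1}(\emptyset;\mathbb{K}) = \mathbb{K}$ correctly accounts for the contribution of minimal generators of $M_G$.
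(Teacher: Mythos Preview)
Your proposal is correct and follows exactly the paper's approach: the paper's proof consists of the single sentence ``This follows immediately from Theorem~\ref{nonsatinit_theo} and \cite[Theorem~4.7]{MilStr05},'' and you have simply unpacked what that citation says and why it applies. The additional explanation you give (tensoring with the residue field, identifying with the relative chain complex, and using the long exact sequence together with acyclicity of $X_{\preceq {\bf c}}$) is precisely the content of the cited theorem, so there is no substantive difference.
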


\begin{proof}
This follows immediately from Theorem  \ref{nonsatinit_theo}
and \cite[Theorem 4.7]{MilStr05}.
\end{proof}

We next state the analogous result for the lattice module $L_G$,
that is, the $\mathbb{K}[{\bf x}]$-module
generated by all Laurent monomials
whose exponent vector lies in the Laplacian lattice 
${\rm image}_\mathbb{Z}(\Lambda_G)$.
We identify this lattice with 
$\, \mathbb{Z}^n/\mathbb{Z}{\bf e}\,$ by writing its elements
as $\Lambda_G \cdot {\bf v}$ where
each ${\bf v} \in \mathbb{Z}^n$ is unique modulo
$\,\mathbb{Z}{\bf e}\, = {\rm ker}(\Lambda_G)$.
The {\em tropical metric} on
$\, \mathbb{Z}^n/\mathbb{Z}{\bf e}\,$ is 
$$ {\rm dist}({\bf u},{\bf v}) \,\,= \,\,
{\rm max} \bigl\{\, | u_i + v_j - u_j - v_i | \,: 1 \leq i < j \leq n \bigr\}. $$
We write ${\rm Apt}(G)$ for the corresponding {\em flag simplicial complex}.
Thus, ${\rm Apt}(G)$ is the simplicial complex 
whose simplices are
subsets $S$ of $\, \mathbb{Z}^n/\mathbb{Z}{\bf e}\,$  such that
$ {\rm dist}({\bf u},{\bf v} ) \leq 1$ for ${\bf u}, {\bf v} \in S$.
The notation ``{\rm Apt}'' refers to the fact that this infinite simplicial complex
is the standard {\em apartment} in the affine building of Lie type $A_{n-1}$.
It is well-known that ${\rm Apt}(G)$ is pure of dimension $n-1$
and that it  triangulates the $(n-1)$-dimensional affine space $ \mathbb{R}^n/\mathbb{R}{\bf e}$.
For more on buildings and their connection to tropical geometry, see \cite{albania}.

The apartment ${\rm Apt}(G)$ is precisely the same
as the Delaunay triangulation constructed in 
\cite{AmiMan10}, and it also coincides with the
Scarf complex of $G_\epsilon$ that we 
used to give a cellular resolution of $L_G$.
The number of $i$-faces of ${\rm Apt}(G)$ modulo the
lattice action is given by (\ref{eq:cycnumber}).
Each vertex ${\bf v}$ of ${\rm Apt}(G)$ is labeled by
the corresponding Laurent monomial ${\bf x}^{\Lambda_G {\bf v}}$,
and each face is labeled by the least common multiple of its vertex labels.
Thus, each face of ${\rm Apt}(G)$ is labeled by a Laurent monomial
of degree $\geq 0$. We write
${\rm Apt}(G)_{\prec {\bf c}}$ for the subcomplex
of all simplices  whose label properly divides ${\bf x^c}$.

\begin{corollary} \label{cor:minsyz2}
The number of minimal $(i+1)$-st syzygies of the lattice module $L_G$ in degree ${\bf c}$
is  the rank of the reduced homology 
$\tilde{H}_{i}({\rm Apt}(G)_{\prec {\bf c}};\mathbb{K})$.
The sum of these ranks over all ${\bf c}$ modulo
${\rm image}_{\mathbb{Z}}(\Lambda_G)$
 counts the minimal $i$-th syzygies of the toppling ideal $I_G$.
\end{corollary}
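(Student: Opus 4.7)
The plan is to proceed in direct analogy with Corollary \ref{cor:minsyz1}, first computing the multigraded Betti numbers of the lattice module $L_G$ from the cellular resolution supplied by ${\rm Apt}(G)$, and then descending to the toppling ideal $I_G$ via the standard translation between lattice modules and lattice ideals.

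For the first assertion, the discussion in the paragraph preceding the corollary (together with Theorem \ref{nonsatinit_theo}) identifies ${\rm Apt}(G)$ as the Scarf complex of a nearby generic lattice module $L_{G_\epsilon}$ bearing the labels from $G$, and hence as a $\mathbb{Z}^n$-graded cellular free resolution of $L_G$. Plugging this cellular resolution into the general Betti-number formula \cite[Theorem 4.7]{MilStr05}, exactly as in the proof of Corollary \ref{cor:minsyz1}, yields
\[
\beta_{i+1,\,{\bf c}}(L_G) \,\,=\,\, \dim_\mathbb{K} \tilde{H}_i\bigl({\rm Apt}(G)_{\prec {\bf c}};\,\mathbb{K}\bigr),
\]
which is precisely the first assertion. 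The reindexing from ``$i$-th syzygy'' in Corollary \ref{cor:minsyz1} to ``$(i+1)$-st syzygy'' here is simply bookkeeping: it makes the homological degree on the right hand side match the one appearing in the second assertion.

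The second assertion would follow by descent along the Laplacian lattice, using the correspondence between lattice modules and lattice ideals developed in \cite[\S 9.1]{MilStr05}. Concretely, the minimal free resolution of $L_G$ is equivariant under the action of ${\rm image}_\mathbb{Z}(\Lambda_G)$, and quotienting by this action produces a minimal free resolution of $\mathbb{K}[{\bf x}]/I_G$ whose multigraded Betti numbers are now indexed by ${\rm Div}(G) = \mathbb{Z}^n/{\rm image}_\mathbb{Z}(\Lambda_G)$. The total $(i+1)$-st Betti number of $\mathbb{K}[{\bf x}]/I_G$ therefore equals the sum of $\beta_{i+1,{\bf c}}(L_G)$ as ${\bf c}$ ranges over a set of representatives for the cosets of ${\rm image}_\mathbb{Z}(\Lambda_G)$ in $\mathbb{Z}^n$, a finite sum because ${\rm Apt}(G)$ carries only finitely many distinct labels modulo the lattice action. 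Finally, the short exact sequence $0 \to I_G \to \mathbb{K}[{\bf x}] \to \mathbb{K}[{\bf x}]/I_G \to 0$ identifies the $(i+1)$-st Betti number of $\mathbb{K}[{\bf x}]/I_G$ with the $i$-th Betti number of $I_G$ for every $i \geq 0$. Combining these identities gives the claim.

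The main technical obstacle is keeping consistent track of (i) the coarsening of the $\mathbb{Z}^n$-grading on $L_G$ to the ${\rm Div}(G)$-grading on $I_G$, and (ii) the homological-index shift coming from the short exact sequence above. Both are routine given the lattice-module formalism of \cite[Ch.~9]{MilStr05}, and together they constitute the only real content in the corollary beyond the direct analogy with Corollary \ref{cor:minsyz1}.
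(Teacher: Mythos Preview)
Your proposal is correct and matches the paper's intent. The paper actually gives no proof for this corollary, leaving it implicit as the lattice-module analogue of Corollary~\ref{cor:minsyz1}; your argument supplies precisely the expected details---the cellular resolution from Theorem~\ref{nonsatinit_theo} together with \cite[Theorem~4.7]{MilStr05} for the first assertion, and the equivariant descent of \cite[\S 9]{MilStr05} (see in particular \cite[Corollary~9.18]{MilStr05}) plus the homological shift between $I_G$ and $\mathbb{K}[{\bf x}]/I_G$ for the second.
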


We conjecture that the ranks of the homology groups in the two corollaries coincide.

\begin{conjecture} \label{conj:betti}
The Betti numbers of the toppling ideal $I_G$ do not increase
when passing to the initial ideal $M_G$. More precisely,
for all $i \geq 0$ and all ${\bf c} \in \mathbb{N}^{n-1}$, we have
\begin{equation}
\label{eq:baryapt}
 \tilde{H}_{i-1}({\rm Bary}(G)_{\prec {\bf c}};\mathbb{K}) \,\,\, \simeq \,\,\,
\tilde{H}_{i}({\rm Apt}(G)_{\prec {\bf c}};\mathbb{K}).
\end{equation}
\end{conjecture}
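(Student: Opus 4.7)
The plan is to prove the conjectured isomorphism (\ref{eq:baryapt}) by first settling the case where $G$ is saturated and then propagating it via the deformation $G_\epsilon \to G$ introduced in the proof of Theorem \ref{nonsatinit_theo}. When $G$ is saturated, Theorem \ref{satminres_theo} implies that $\mathcal{CYC}_G$ and the barycentric subdivision ${\rm Bary}(G)$ are the \emph{minimal} Scarf resolutions of $I_G$ and $M_G$ respectively, so by Corollaries \ref{cor:minsyz1} and \ref{cor:minsyz2} both sides of (\ref{eq:baryapt}) compute the same graded Betti number of $I_G$, giving the isomorphism at once. The degree shift originates combinatorially in the bijection between cyclically ordered partitions of $[n]$ into $k$ blocks (indexing $(k-2)$-cells of ${\rm Apt}(G)/{\rm image}_\mathbb{Z}(\Lambda_G)$) and ordered partitions of $[n]$ into $k$ blocks with $n$ in the last block (indexing $(k-2)$-cells of ${\rm Bary}(G)$): fixing which block contains $n$ identifies the cyclic symmetry with a choice of base-point, and the freedom of that choice contributes exactly one homological degree.

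For general $G$, deform to a saturated $G_\epsilon$ with fractional edge multiplicities $u_{ij}(\epsilon) > 0$ as in the proof of Theorem \ref{nonsatinit_theo}, so that (\ref{eq:baryapt}) holds for $G_\epsilon$ at every threshold. Both subcomplexes ${\rm Bary}(G_\epsilon)_{\prec {\bf c}}$ and ${\rm Apt}(G_\epsilon)_{\prec {\bf c}}$ specialize to their counterparts for $G$ by identifying those simplices whose lcm-labels collide in the limit. The core technical step is to produce an explicit chain-level map
$$\Phi^{\bf c}: C_\bullet({\rm Bary}(G)_{\prec {\bf c}}) \longrightarrow C_{\bullet+1}({\rm Apt}(G)_{\prec {\bf c}}),$$
sending each simplex $(I_1,\ldots,I_r)$ with $n \in I_r$ to the formal cyclic cell adjacent to a fixed lattice translate of a base vertex of ${\rm Apt}(G)$, and then to show $\Phi^{\bf c}$ is a quasi-isomorphism. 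I would first try to verify this via an acyclic carrier argument: for each simplex $\sigma$ of ${\rm Bary}(G)_{\prec {\bf c}}$ the carrier is the subposet of ${\rm Apt}(G)_{\prec {\bf c}}$ consisting of simplices that cone off to $\Phi^{\bf c}(\sigma)$, and each such subposet should be contractible by a shelling of its order complex.

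The main obstacle I anticipate is precisely verifying contractibility of these fibers in the non-saturated case, where label collisions may be asymmetric between ${\rm Bary}(G)$ and ${\rm Apt}(G)$ and may destroy shellability. As a fallback, I would replace the acyclic carrier argument with a Mayer--Vietoris spectral sequence indexed by the pattern of label collisions in the degeneration $G_\epsilon \to G$. The saturated case matches the $E_2$-pages of the two spectral sequences term-by-term, so the remaining -- and genuinely delicate -- point is to show that the higher differentials of the two spectral sequences agree. Even if this matching cannot be completed in full generality, it should at least establish Conjecture \ref{conj:betti} for graphs that differ from a saturated graph by a small number of edges, thereby providing strong evidence for the conjecture.
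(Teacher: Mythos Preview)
The statement you are attempting to prove is labeled a \emph{Conjecture} in the paper, and the paper offers no proof: the authors only remark that it ``has been verified for many graphs using the software {\tt Macaulay2}.'' So there is no proof in the paper to compare your proposal against, and the honest assessment is whether your argument actually closes the gap. It does not.

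Your treatment of the saturated case is too quick. Knowing that $\mathcal{CYC}_G$ and ${\rm Bary}(G)$ are minimal Scarf resolutions tells you that the \emph{total} Betti numbers of $I_G$ and $M_G$ agree, but the conjecture is a degree-by-degree statement for each ${\bf c}\in\mathbb{N}^{n-1}$, where the right-hand side lives in the $\mathbb{Z}^n$-graded world of $L_G$. You must explain how ${\bf c}$ is to be lifted to $\mathbb{Z}^n$ and why, for that specific lift, the unique Scarf face of ${\rm Bary}(G)$ with label ${\bf x^c}$ (if any) matches a unique Scarf face of ${\rm Apt}(G)$ with label ${\bf x}^{({\bf c},0)}$ of one higher dimension. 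This is plausible via \cite[Theorem~5.4]{PeevaStr98}, but it is not the tautology you present it as, and your sentence about ``the freedom of that choice contributes exactly one homological degree'' is heuristic, not an argument.

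For the non-saturated case, your proposal is a research plan rather than a proof, and you candidly say so: the acyclic-carrier argument hinges on contractibility of fibers that you yourself flag as the ``main obstacle,'' and your spectral-sequence fallback leaves the matching of higher differentials as a ``genuinely delicate'' open point. The deformation $G_\epsilon\to G$ transports the \emph{cellular} resolutions, but precisely what is at stake in the conjecture is whether the non-minimality introduced on the ${\rm Bary}$ side and on the ${\rm Apt}$ side cancel in exactly the same pattern; nothing in your outline forces this. In short, you have restated the difficulty rather than resolved it, and the conjecture remains open after your proposal just as it does in the paper.
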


This conjecture has been verified for many graphs using the
software {\tt Macaulay2}. We note that the two simplicial complexes
appearing in (\ref{eq:baryapt}) are different from the
 complex $\Delta_D$ used in Hochster's formula
 for the Betti numbers of a lattice ideal
  \cite[Theorem 7.4]{Perkinson}.

\begin{example} \rm
The simplicial complexes ${\rm Apt}(G)_{\prec {\bf c}}$
can be large even for small graphs. 
Let $G$ be the graph on four nodes, labeled
$a,b,c,d$, with Laplacian matrix
$$
\Lambda_G \,= \,
\begin{pmatrix}
u_{12} {+}u_{13} {+} u_{14} \!\!\,\,  & -u_{12} & -u_{13} & -u_{14} \\
 -u_{12} & \!\!\!\! u_{12} {+} u_{23} {+} u_{24} \!\!\!\! & -u_{23} & -u_{24} \\
 -u_{13} & -u_{23} & \!\!\! \! u_{13}{+} u_{23}{+} u_{34} \!\!\!\! & -u_{34} \\
 -u_{14} & -u_{24} & -u_{34} & \!\!\!\! u_{14} {+}u_{24} {+} u_{34} 
 \end{pmatrix}
 \, = \,
 \begin{pmatrix}
\phantom{-}2  & -2  & \phantom{-}0 &\phantom{-} 0 \, \\
-2 &\phantom{-} 3 & -1 & \phantom{-}0 \,\\
\phantom{-}0 & -1 & \phantom{-}4 & -3 \,\\
\phantom{-}0 & \phantom{-}0   &-3 & \phantom{-}3 \,
\end{pmatrix}.
 $$
Both the toppling ideal and the ideal of parking functions are complete intersections:
 $$ I_G \,= \,\langle a^2-b^2, b-c, c^3-d^3 \rangle
 \quad \hbox{and} \quad
 M_G \, = \,\langle a^2, b, c^3 \rangle. $$
 The monomial ideal $M_G$ has one minimal first syzygy in degree
 ${\bf c} = (2,0,3,0)$. The simplicial complex $\,{\rm Bary}(G)_{\prec {\bf c}}\,$
 consists of two isolated nodes $ a^2$ and $ c^3$. The simplicial complex 
 $\,{\rm Apt}(G)_{\prec {\bf c}}\,$ is two-dimensional  but
 it has the homology of a circle. It consists of
 $12$ triangles,   $ 28$ edges and $16 $ vertices,
  labeled by the following generators of~$L_G$:
$$ 
1 ,\,
\frac{ c }{ b},\,
\frac{ c^2 }{ b^2},\,
\frac{ c^3 }{ b^3},\,
\frac{ c^2 }{ a^2},\,
\frac{ c^3 }{ a^2b} , \,
\frac{ c^3 }{ d^3},\,
\frac{ a^2c }{ d^3 },\,
\frac{ a^2 }{ c^2},\,
\frac{ a^2c^2 }{ bd^3},\,
\frac{ a^2 }{ bc},\,
\frac{ a^2c^3 }{ b^2d^3},\,
\frac{ a^2 }{ b^2},\,
\frac{ a^2c }{ b^3},\,
\frac{ a^2 c^2 }{ b^4},\,
\frac{ a^2c^3 }{ b^5}.
$$
The lattice module $L_G$ has one second syzygy in this degree,
translating into a first syzygy of $I_G$.
It is represented in
$\,{\rm Apt}(G)_{\prec {\bf c}}\,$
by the $4$-cycle 
$\, 1  ,a^2 /b^2,a^2c^3/b^2d^3,    c^3 / d^3$. \qed
\end{example}

 At present, no explicit minimal free resolution of $M_G$ is known.
 Finding such a resolution was stated as an open problem 
 by Postnikov and Shapiro in \cite[\S 6]{PosSha}. We do not even know whether the Betti numbers depend on the characteristic of the field~$\mathbb{K}$.

An explicit  formula for the Betti numbers of the toppling ideal $I_G$
was conjectured by Wilmes in \cite{Wil10}. See also \cite[\S 7.4]{Perkinson}.
Wilmes' formula is combinatorial, and it has been verified for all graphs
with $n \leq 6$ nodes. At present we do not know how to relate
Wilmes' conjecture to the ranks of the homology groups in Corollaries
\ref{cor:minsyz1} and \ref{cor:minsyz2}.

It is known, thanks to \cite[Theorem 3.10]{Wil10}, that Conjecture \ref{conj:betti}
is true for the maximal syzygies, with index $i = n-2$.
We have the following combinatorial characterization:

\begin{corollary}\label{MaxSyz_cor}
The maximal syzygies of the parking function ideal $M_G$, or of the
 toppling ideal $I_G$, are in bijection with the
acyclic orientations of $G$ with node $n$ as unique~sink.
\end{corollary}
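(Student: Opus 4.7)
The plan is to split the claim into an analysis of $M_G$ and a transfer step to $I_G$. First I would observe that, since $M_G$ is an artinian monomial ideal in $\mathbb{K}[{\bf x}_{\backslash n}] = \mathbb{K}[x_1,\ldots,x_{n-1}]$, its projective dimension is $n-2$ and its maximal Betti number $\beta_{n-2}(M_G)$ equals the number of socle monomials of $\mathbb{K}[{\bf x}_{\backslash n}]/M_G$, with an explicit multigraded correspondence at the level of the $\mathbb{Z}^{n-1}$-graded free resolution. Because the standard monomials of $M_G$ are precisely the $G$-parking functions with respect to the sink $n$, the socle monomials of $M_G$ are the \emph{maximal} $G$-parking functions, namely those of top degree $g = e-n+1$ (the genus of $G$).

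Next, I would invoke the classical combinatorial bijection of Benson, Chakrabarty and Tetali \cite{BCT} (compare also \cite{PosSha}) between the set of maximal $G$-parking functions with sink $n$ and the set of acyclic orientations of $G$ in which node $n$ is the unique sink. Composing this with the Betti-number identification above proves the assertion for $M_G$.

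Finally, to transport the statement to the toppling ideal $I_G$, I would appeal to the result of Wilmes \cite[Theorem 3.10]{Wil10} cited just before the corollary, which asserts that Conjecture \ref{conj:betti} is true at the top index $i = n-2$. This guarantees that the maximal Betti numbers of $M_G$ and $I_G$ agree (and in fact that the multigraded top syzygies match up via the Laplacian lattice action), so the bijection established for $M_G$ passes to $I_G$. The only step with real content is the BCT bijection between maximal parking functions and unique-sink acyclic orientations; the rest is either a standard commutative-algebra identification of top syzygies with the socle or a direct invocation of Wilmes' top-Betti-number theorem, so I do not foresee an essential obstacle beyond citing these two inputs.
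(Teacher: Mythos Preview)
Your proposal is correct and follows essentially the same route as the paper. The paper's proof likewise identifies the maximal syzygies of the artinian ideal $M_G$ with its socle, identifies the socle with the maximal $G$-parking functions, invokes \cite[Theorem~4.1]{BCT} for the bijection with acyclic orientations having $n$ as unique sink, and then passes to $I_G$ by noting that the top syzygies of $M_G = {\rm in}(I_G)$ lift to $I_G$---exactly the content of \cite[Theorem~3.10]{Wil10}, cited immediately before the corollary. The only cosmetic difference is that the paper phrases the last step as ``all maximal syzygies of $M_G$ lift'' rather than naming Wilmes' theorem again inside the proof.
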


See \cite[Theorem 7.6]{Perkinson} for an alternative
but equivalent formulation of this result.

\begin{proof}
The monomial ideal $M_G$ is artinian, so its maximal syzygies
correspond to the socle elements. These are the maximal parking functions,
and, by \cite[Theorem 4.1]{BCT}, they correspond to 
cyclic orientations of $G$ with node $n$ as unique~sink.
Since all maximal syzygies of $M_G = {\rm in}(I_G)$ lift to maximal syzygies of $I_G$,
the same result holds for $I_G$.
\end{proof}

We now derive the Riemann-Roch theorem for non-saturated graphs $G$.
  Let $M_G$ be the initial ideal with respect to a spanning tree order on the variables 
with $x_n$ as the least. By Corollary \ref{MaxSyz_cor}, 
we know that  the socle monomials of the Laurent monomial module $L_G$ are $s\cdot x_{n}^{-1}\cdot {\bf x^w}$ where $s$ runs over all socle monomials of $M_G$ and ${\bf x^{w}}$ runs over minimal generators of $L_G$. Unlike in the saturated case, the monomial ideal $M_G$ is generally not reflection-invariant.
But the Laurent monomial module $L_G$ is always reflection-invariant. To see this, we use 
Lemma 3.2 of \cite{BakNor07} to deduce that $s_{\mathcal{T}}/x_n$, defined in (\ref{eq:MGsoc}), is not 
contained in $L_G$ for any complete flag $\mathcal{T}$ of $[n]$. This implies that $s_{\mathcal{T}}/{x_n}$ is a 
socle element of $L_G$, since every Laurent monomial of degree greater than ${\rm degree}(s_{\mathcal{T}}/{x_n})={\rm genus}(G)-1$ is  in $L_G$. We now immediately verify that $L_G$ is reflection-invariant with the involution on ${\rm MonSoc}(L_G)$ that takes $\,s_{\mathcal{T}}{\bf x^w}/{x_n}\,$ to 
$\, s_{\phi{(\mathcal{T})}}{\bf x^{-w}} x_n^{d_n-1}/\prod_{i=1}^{n-1}x_i^{u_{in}}$, where $\phi(\mathcal{T})$ is the reverse flag of $\mathcal{T}$ exactly as in the proof of Theorem \ref{thm:BN}.
The canonical monomial is
 $$ {\bf x^k} \,\, = \,\, x_1^{d_1-2} x_2^{d_2-2} \cdots \, x_n^{d_n-2}, $$ 
 where $d_i = \sum_{j\not=i} u_{ij}$ is the degree of node $i$. Hence, 
 $L_G$ satisfies the Riemann-Roch formula,  in its monomial formulation 
  (\ref{eq:RRF}), with $M = L_G\,$ and $\,{\rm genus}(M) = {\rm genus}(G)$.

 \bigskip \medskip
 
 \noindent
 {\bf Acknowledgments}:
  We thank David Perkinson,
  B.V.~Raghavendra Rao,  Frank-Olaf Schreyer,
  and John Wilmes for helpful discussions.
      Bernd Sturmfels was partially supported by
  the U.S.~National Science Foundation
  (DMS-0757207 and DMS-0968882).

\bigskip
\medskip

\begin{small}
\noindent
Madhusudan Manjunath, Fachrichtung Mathematik,\\
Universit\"at des Saarlandes,
Saarbr\"ucken, Germany. \\
{\tt manjun@mpi-inf.mpg.de}

\medskip

\noindent
Bernd Sturmfels, Department of Mathematics, \\ 
University of California, Berkeley, USA, \\
{\tt bernd@math.berkeley.edu}
\end{small}

\end{document}